\newtheorem{theorem}{Theorem}
\newtheorem{lemma}[theorem]{Lemma}
\newtheorem{definition}[theorem]{Definition}
\newtheorem{example}[theorem]{Example}
\definecolor{darkgreen}{rgb}{0,0.4,0}
\definecolor{MyBlue}{rgb}{0,0.08,0.7} 
\definecolor{MyRed}{rgb}{0.85,0.08,0} 
\newcommand{\bi}{{\, \color{MyRed}\leftrightarrow}\,}
\newcommand{\toblue}{{\, \color{MyBlue}\to}\,}
\renewcommand{\bi}{\leftrightarrow}
\renewcommand{\toblue}{\to}
\newcommand\trans{T}
\DeclareMathOperator{\var}{Var}
\DeclareMathOperator{\tr}{tr}
\renewcommand*\env@matrix[1][\arraystretch]{%
  \edef\arraystretch{#1}%
  \hskip -\arraycolsep
  \let\@ifnextchar\new@ifnextchar
  \array{*\c@MaxMatrixCols c}}
\title{\textbf{Structure Learning for Cyclic Linear \\ Causal Models}}
\author{Carlos Am\'{e}ndola, Philipp Dettling, Mathias Drton, Federica Onori, Jun Wu}
\date{}
\begin{document}

\maketitle

\begin{abstract}
We consider the problem of structure learning for linear causal models based on observational data.  We treat models given by possibly cyclic mixed graphs, which allow for feedback loops and effects of latent confounders.  Generalizing related work on bow-free acyclic graphs, we assume that the underlying graph is simple.  This entails that any two observed variables can be related through at most one direct causal effect and that (confounding-induced) correlation between error terms in structural equations occurs only in absence of direct causal effects.
We show that, despite new subtleties in the cyclic case, the considered simple cyclic models are of expected dimension and that a previously considered criterion for distributional equivalence of bow-free acyclic graphs has an analogue in the cyclic case.  Our result on model dimension justifies in particular score-based methods for structure learning of linear Gaussian mixed graph models, which we implement via greedy search.
\end{abstract}

\section{Introduction}

Inferring the structure of a causal model with feedback loops from observational data is a notoriously difficult---if not impossible---problem, particularly if one also seeks to guard against presence of latent confounders \cite{evans2018model,spirtes:2000}.  We consider this problem for linear causal models given by mixed graphs (or path diagrams) with directed and bidirected edges. As detailed in Section \ref{sec:background}, the vertices of such a graph correspond to the observed variables, and the directed edges encode structural equations that 
relate these variables up to stochastic noise.  The bidirected edges indicate possible correlations among the noise terms, as may be induced by latent confounders.

Much work has gone into algorithms that exploit conditional independence relations for learning the structure of causal models, or rather suitable equivalence classes of graphs encoding this structure; see, e.g., \cite{Rantanen20,hytt_UAI_14,Hyttinen3,richardson1996,
Forre}  
or also the review of Spirtes and Zhang in \cite[\S18]{handbook}.  While methods have been developed that use information about conditional independence relations also in settings  with feedback loops or latent variables, there is an inherent limitation to this approach as causal models with feedback loops or latent variables can generally not be characterized using conditional independence constraints alone \cite{VermaPearl90,EvansEtAl14,VanOmmenMooij_UAI_17,drton:robeva:202x}.  Alternatively, structure learning can be approached using score-based search techniques; see, e.g., \cite{MR3099113,MR1991085,solus2017consistency}.  For models with feedback loops and/or latent variables, however, the definition of an appropriate statistical score is non-trivial as the model parameters need not be identifiable and, consequently, the model dimension may differ from the number of parameters that are used to specify the model.  Although methods exist to detect identifiability or lack thereof \cite{foygel:draisma:drton:2012,div,kumor:2019}, it is generally unclear when a linear causal model with feedback loops or latent variables is of the dimension expected from a parameter count \cite{drton:algebraic}. 

Motivated by these difficulties, prior work has also made attempts to determine more tractable settings.  For instance, it has been shown that if an acyclic mixed graph is bow-free, i.e., no pair of nodes is joined by both a directed and a bidirected edge, then the parameters of the induced model are generically identifiable \cite{brito:pearl:2002}, which entails expected dimension.  Score-based methods for structure learning in this setting were proposed in \cite{nowzohour:2017}.

 In this paper we generalize results that have been obtained for bow-free acyclic mixed graphs to graphs that may have a cyclic directed part.   In other words, we consider mixed graphs that are simple, i.e., have at most one edge between any pair of nodes, but which need not be acyclic.   The presence of cycles brings about many new challenges \cite{bongers2016theoretical} and, in particular, generic identifiability cannot be guaranteed.  However, as our main results show, the considered simple cyclic models are of expected dimension (Theorem \ref{thm:dim}) and a previously considered criterion for distributional equivalence of bow-free acyclic graphs has an analogue in the cyclic case (Theorem \ref{thm:equivalence}).  Using the result on dimension, we propose a model selection score and associated greedy search techniques for structure learning of linear Gaussian mixed graphs. Numerical experiments indicate the need to carefully account for the large size of the set of models/graphs the search considers (Section \ref{sec:modelsel}).

  \section{Background} 
 \label{sec:background}
 
 Let $X=(X_i:i\in V)$ be a random vector whose coordinates correspond to observed variables.  The considered models assume $X$ to be the solution to
 a linear equation system of the form
\begin{equation}
  \label{eq:sem:general}
  X \;=\; \Lambda^\trans X + \varepsilon,
\end{equation}
where $\Lambda=(\lambda_{ij})\in\mathbb{R}^{V\times V}$ is a matrix of
unknown parameters, and $\varepsilon=(\varepsilon_i:i\in V)$ is a random vector whose coordinates represent stochastic noise.  Suppose $\varepsilon$ has (unknown) covariance matrix
$\Omega=(\omega_{ij})$. Assuming that $I-\Lambda$ is invertible ($I$ denotes the identity matrix), the system in~(\ref{eq:sem:general}) is solved uniquely by
$X = (I-\Lambda)^{-\trans}\varepsilon$ with covariance matrix
\begin{equation}
  \var[X] \,=\, (I - \Lambda)^{-\trans} \Omega (I -
  \Lambda)^{-1} \;=:\; \phi(\Lambda,
  \Omega).\label{eq:VarX}
\end{equation}
Specific models of interest place restrictions on the support of $\Lambda$ and $\Omega$, and this is naturally represented by a graph. More precisely, we adopt mixed graphs since we have two parameter matrices, $\Lambda$ and $\Omega$, whose rows and columns are
indexed by the same set $V$. 

A mixed graph with vertex set $V$ is a triple $G=(V,D,B)$ where
$D,B\subseteq V\times V$ are two sets of edges.  The set $D$ comprises  
ordered pairs $(i,j)$, $i\not=j$, that encode directed edges, which we also denote by $i\toblue j$.  Node $j$ is the head of such an edge.  The elements of $B$ are unordered pairs $\{i,j\}$ with $i\not=j$ that encode bidirected edges, also denoted by $i\bi j$.  These edges have no orientation, and
$i\bi j\in B$ if and only if $j\bi i\in B$.  It is convenient to
call both endpoints $i$ and $j$ heads of $i\bi j$.  A collider triple in $G=(V,D,B)$ is a triple of vertices $(i,j,k)$ such that there are edges between $i$ and $j$ and between $j$ and $k$, with $j$ being a head on both these edges.   In other words, the two edges form a path of the form $i \to j  \leftarrow k$, $i \bi j  \leftarrow k$, $i \to j  \bi k$ or $i \bi j  \bi k$.  We emphasize that whether $(i,j,k)$ is a collider triple does not depend on absence or presence of an edge between $i$ and $k$.   Finally, the skeleton of $G$ is the undirected graph obtained by replacing all edges in $(V, D \cup B)$ by undirected edges. 

In Section \ref{sec:sufficient} we will use the concept of treks. A trek is a path without collider triples and thus takes the form:
\begin{align*}
    v_l^{L} &\leftarrow \dots \leftarrow v_{1}^{L} \leftrightarrow v_{1}^{R} \rightarrow \dots \rightarrow v_{r}^{R},  \text{ or }\\
    &\,\,v_l^{L} \leftarrow \dots \leftarrow v_0  \rightarrow \dots \rightarrow v_{r}^{R}.
\end{align*}
Each trek can be decomposed in two directed paths, referred to as the right and the left side of the trek.  A trek is simple if its left- and right-hand side do not intersect, with the exception of an intersection at the top node $v_0$ for the second type of trek.

In the sequel, the key assumption we make about the considered graphs is that they be \emph{simple} mixed graphs.  This means that we do not allow more than one edge (of any type) between any two nodes.  Note that this allows for presence of directed cycles of length at least 3.
\smallskip

\begin{example}
  Two simple mixed graphs are displayed in Figure \ref{fig:two_graph}.  The first is a directed 3-cycle, which has no collider triples. The second graph has the collider triple $(1,3,2)$.  Both graphs have the same skeleton.
\end{example}

Let $\mathbb{R}^D$ be the set of real $V\times V$-matrices
$\Lambda=(\lambda_{ij})$ with support in $D$, that is, 
\begin{equation}
  \label{eq:R^D}
      \mathbb{R}^D = \big\{\,\Lambda \in \mathbb{R}^{V
      \times V} : \lambda_{ij} = 0 \ \text{ if } \ i \toblue j \notin D
    \,\big\}.
\end{equation}
Define $\mathbb{R}^D_\mathrm{reg}$ to be the subset of matrices
$\Lambda\in\mathbb{R}^D$ for which $I-\Lambda$ is invertible. Similarly, let
$\mathit{PD}$ be the cone of positive definite symmetric
$V\times V$-matrices, and define
$\mathit{PD}(B)$ to be the subcone with support over $B$, that
is, 
\begin{equation} \label{eq:PDB}
  \mathit{PD}(B)= \big\{\, \Omega=(\omega_{ij}) \in \mathit{PD} : \omega_{ij} = 0
  \ \text{ if } \
  i \neq j \mbox{ and } i \bi j \notin B\,\big\}.
\end{equation}

 \begin{figure}[t] 
  \center
   \hspace*{-.6cm}
\includegraphics[scale=0.35]{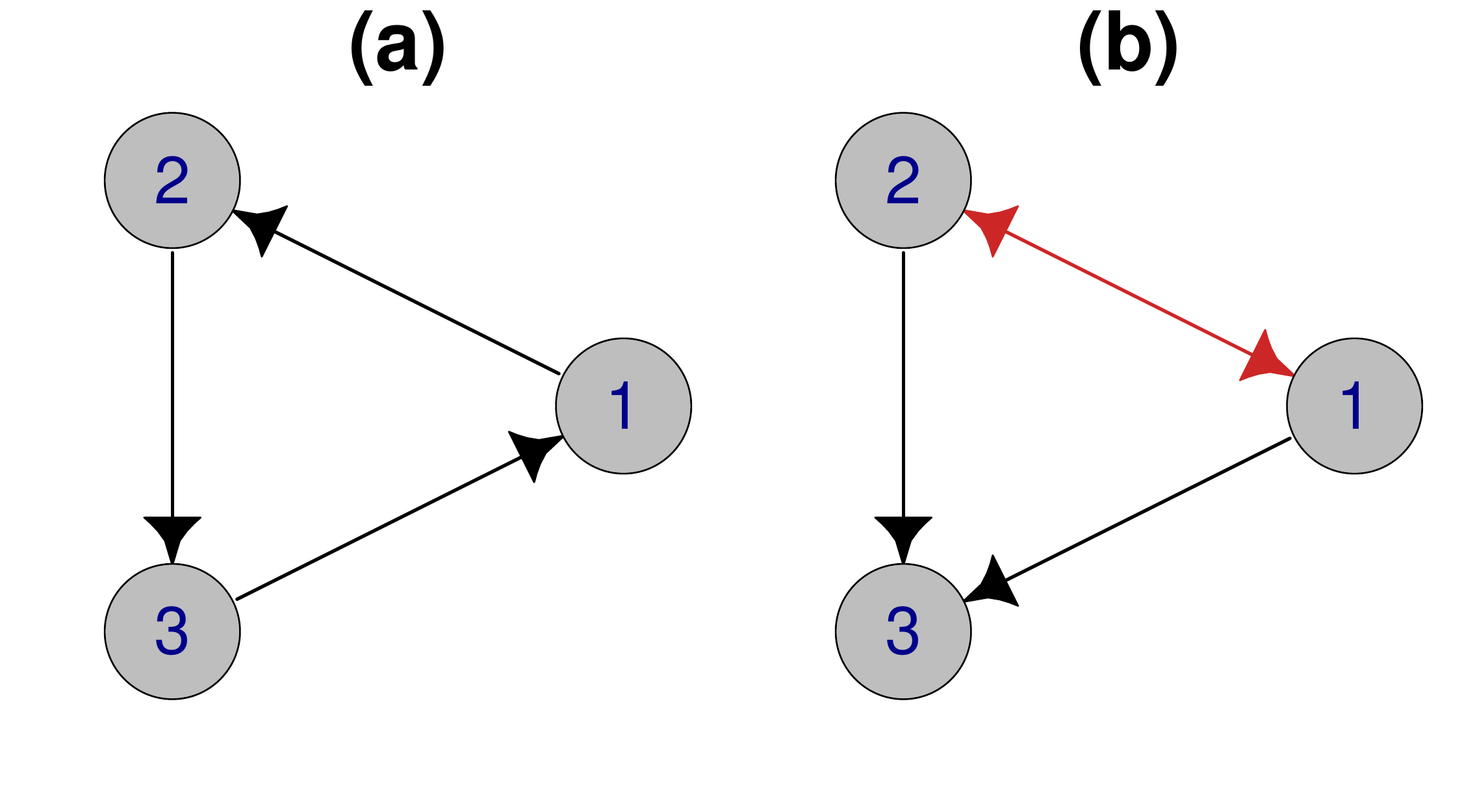}
\caption{Two simple mixed graphs.}
  \label{fig:two_graph}
\end{figure}

\begin{definition}
  \label{def:sem}
  The \emph{linear Gaussian model} given by the mixed graph
  $G=(V,D,B)$ is the family of all multivariate normal distributions
  on $\mathbb{R}^V$ with covariance matrix in 
  \begin{equation*}
    \mathcal{M}_G 
    = \left\{ (I-\Lambda)^{-T}\Omega(I-\Lambda)^{-1} \right.
      :  \left. 
      \Lambda\in\mathbb{R}^D_\mathrm{reg}, \,
      \Omega\in\mathit{PD}(B)\right\}.\nonumber
  \end{equation*}
  The \emph{covariance parametrization} of the model is the map
  \begin{align}
    \phi_G :  \mathbb{R}^D_{\rm reg} \times \mathit{PD}(B) 
    &\mapsto \mathit{PD}, \\
    (\Lambda, \Omega) &\mapsto (I - \Lambda)^{-\trans} \Omega (I -
                        \Lambda)^{-1}.\nonumber
  \end{align}
\end{definition}

Note that the model leaves the mean vector of the normal distribution unrestricted.  Without loss of generality, we may assume it to be zero.

It is well-known that different mixed graphs may induce the same model $\mathcal{M}_G$, which we describe as follows. 
\begin{definition}
Two mixed graphs $G_1$ and $G_2$ are distributionally equivalent if $\mathcal{M}_{G_1}=\mathcal{M}_{G_2}$. 
\end{definition}

Distributional equivalence is a stronger requirement than Markov equivalence, which only requires the same conditional independence relations. For acyclic simple mixed graphs, distributional equivalence is implied by having the same skeleton and collider triples \cite{nowzohour:2017}.

\section{Expected Dimension}
\label{sec:dimension}

Knowing the dimension of the set of covariance matrices $\mathcal{M}_G$ of a linear causal model is crucial for statistical model selection. In particular, the dimension features in model selection scores/information criteria \cite{sBIC}. Because $\mathcal{M}_G$ is defined by the pair of parameter matrices $(\Lambda,\Omega)$, it is natural to expect that its dimension $\dim(\mathcal{M}_G)$ equals the number of unknown parameters, which is $|V|+|D|+|B|$. For general graphs, however, this may fail to be true in subtle ways \cite{foygel:draisma:drton:2012}.  Nevertheless, the following theorem shows that this complication does not arise when considering simple graphs.

\begin{theorem}\label{thm:dim}
If the graph $G$ is simple, then
\[
\dim(\mathcal{M}_G) =|V|+|D|+|B|.
\]
\end{theorem}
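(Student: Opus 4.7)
The plan is to separate upper and lower bounds on $\dim(\mathcal{M}_G)$ and argue each via a short observation. The upper bound $\dim(\mathcal{M}_G) \le |V|+|D|+|B|$ is immediate because $\mathcal{M}_G$ is the image of the smooth parametrization $\phi_G$, whose domain $\mathbb{R}^D_{\mathrm{reg}} \times \mathit{PD}(B)$ has dimension exactly $|V|+|D|+|B|$ (namely $|D|$ entries of $\Lambda$, $|V|$ diagonal entries and $|B|$ off-diagonal entries of $\Omega$). For the matching lower bound, it suffices, by lower semicontinuity of the Jacobian rank together with the constant-rank theorem applied on a neighborhood, to exhibit a single point of the domain at which the differential $d\phi_G$ is injective.

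I would evaluate at the point $(\Lambda^*, \Omega^*) = (0, I)$. This is the crucial choice: here $(I-\Lambda^*)^{-1} = I$, so no combinatorial feature of cycles enters the linearization. A direct first-order expansion of $\phi_G(t\dot\Lambda, I + t\dot\Omega)$ gives
\[
d\phi_G\big|_{(0, I)}(\dot\Lambda, \dot\Omega) \;=\; \dot\Lambda + \dot\Lambda^T + \dot\Omega.
\]

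Injectivity is then verified by inspecting each entry of the equation $\dot\Lambda + \dot\Lambda^T + \dot\Omega = 0$. On the diagonal, $\dot\Lambda_{ii} = 0$ because $D$ contains no self-loops, forcing $\dot\Omega_{ii} = 0$. For $i \ne j$, simplicity of $G$ guarantees that at most one of the three potential edges $i \to j$, $j \to i$, $i \bi j$ is actually present, so at most one of the three scalars $\dot\Lambda_{ij}, \dot\Lambda_{ji}, \dot\Omega_{ij}$ is an independent variable; the other two vanish by the support constraints defining $\mathbb{R}^D$ and $\mathit{PD}(B)$, and then the equation forces the remaining one to vanish as well. Hence $(\dot\Lambda, \dot\Omega) = (0, 0)$ and $d\phi_G$ is injective at $(0, I)$.

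The only genuine content is this short case analysis: simplicity of the skeleton is precisely the hypothesis that eliminates any cancellation in $\dot\Lambda + \dot\Lambda^T + \dot\Omega = 0$ (a single bow $i \to j$, $i \bi j$, or a pair of opposed directed edges $i \to j$, $j \to i$, would already destroy injectivity). Because the base point $(0, I)$ trivializes the factor $(I-\Lambda)^{-1}$, directed cycles of $G$ never enter the calculation at all, which is exactly why the expected-dimension conclusion extends from the bow-free acyclic case to all simple mixed graphs.
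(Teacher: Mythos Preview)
Your argument is correct and shares the paper's core idea: evaluate the Jacobian of $\phi_G$ at the special point $(\Lambda,\Omega)=(0,I)$ and use simplicity of $G$ to read off full rank via an entry-by-entry case analysis. The paper reaches the same endpoint through a slightly longer detour: it introduces the auxiliary map $g(\Lambda,\Sigma)=(I-\Lambda)^T\Sigma(I-\Lambda)$ and an $N\times D$ reduced Jacobian $\mathbf{J}(\Lambda,\Sigma)$, proves a general rank identity $\mathrm{rank}\,J_G=\mathrm{rank}\,\mathbf{J}+|B|+|V|$ (Lemma~\ref{lem2.1.2}), and then specializes $\mathbf{J}(0,I)$ to a matrix whose columns are distinct negative standard basis vectors precisely when $G$ is simple (Lemma~\ref{lem2.1.3}). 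Your direct computation of $d\phi_G|_{(0,I)}(\dot\Lambda,\dot\Omega)=\dot\Lambda+\dot\Lambda^T+\dot\Omega$ bypasses this machinery and is more elementary; the paper's framework, borrowed from earlier work on identifiability, has the advantage of isolating the $\Lambda$-contribution in a form reusable for other questions, and of yielding the ``only if'' direction of Lemma~\ref{lem2.1.3} (that non-simple graphs fail to have full-rank Jacobian at $(0,I)$) with equal ease.
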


Let $J_G$ be the Jacobian of the covariance parametrization $\phi_G$.  As discussed, e.g., in \cite{MR1863967}, the dimension of $\mathcal{M}_G$ equals the maximal rank of $J_G$.   
Before proving Theorem~\ref{thm:dim} based on  this fact, we extend observations from \cite{foygel:draisma:drton:2012} that simplify studying the rank of $J_G$.

On the domain $\mathbb{R}^D_{\rm reg}\times PD$, define the map
\begin{equation}
\label{eq:g}
g:(\Lambda,\Sigma) \mapsto(I-\Lambda)^T\Sigma(I-\Lambda).
\end{equation}
A positive definite matrix $\Sigma$ is in $\mathcal{M}_G$ if and only if there exist $\Lambda\in\mathbb{R}^D_\mathrm{reg}$ and $\Omega\in\mathit{PD}(B)$ such that  $\Sigma=\phi_G(\Lambda,\Omega)$, which holds if and only if $g(\Lambda,\Sigma)=\Omega$.
Let 
$$N=\{\{i,j\}: i,j\in V,\; i\neq j,\; \{i,j\}\notin B\}.$$
Then $\Sigma\in\mathcal{M}_G$ if and only if there exists $\Lambda\in\mathbb{R}^D_\mathrm{reg}$ such that 
\begin{equation}
    g_{ij}(\Lambda,\Sigma)= \left[(I-\Lambda)^T\Sigma(I-\Lambda)\right]_{ij}=0
\end{equation}
for all $\{i,j\}\in N$.  Consider then the $N\times D$ Jacobian matrix $\mathbf{J}(\Lambda,\Sigma)$ whose entries are the partial derivatives
\begin{equation}
    \mathbf{J}(\Lambda,\Sigma)_{\{i,j\},(k,l)} =
\frac{\partial g_{ij}(\Lambda,\Sigma)}{\partial \lambda_{kl}}
\end{equation}
with $\{i,j\}\in N$ and $k\to l\in D$.  For $i\not=j$, $g_{ij}$ is multilinear in $\Lambda$ and
\begin{align}\label{lem2.1.1}
   \frac{\partial g_{ij}(\Lambda,\Sigma)}{\partial \lambda_{kl}}
    &=
    \begin{cases}
    -[(I-\Lambda)^T\Sigma]_{jk} &\text{if} \ l = i,\\
    -[(I-\Lambda)^T\Sigma]_{ik} &\text{if} \ l = j,\\
    0 & \text{if} \ l\notin \{i,j\}.
    \end{cases}
\end{align}

\begin{lemma}\label{lem2.1.2}
    For $\Lambda\in\mathbb{R}^D_\mathrm{reg}$, $\Omega\in\mathit{PD}(B)$, let $\Sigma=\phi_G(\Lambda,\Omega)$. Then the rank of the Jacobian $J_G(\Lambda,\Omega)$ is equal to
    \[
    \mathrm{rank}\left(\mathbf{J}\left(\Lambda,\Sigma\right)\right)+|B|+|V|. 
    \]
\end{lemma}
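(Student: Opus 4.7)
The plan is to split $J_G$ into its $\Omega$-block and its $\Lambda$-block and argue that the former contributes $|V|+|B|$ independent directions, while the latter, once projected to a complementary subspace, contributes exactly $\mathrm{rank}(\mathbf{J}(\Lambda,\Sigma))$. This is the change-of-variables idea already visible in \cite{foygel:draisma:drton:2012}, adapted here.

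First I would observe that $\phi_G(\Lambda,\cdot)$ is linear, so $\partial\phi_G/\partial\Omega$ is the map $\Omega'\mapsto (I-\Lambda)^{-\trans}\Omega'(I-\Lambda)^{-1}$, sending the tangent space of $\mathit{PD}(B)$ at $\Omega$ (namely the symmetric matrices supported on $B$ together with the diagonal, of dimension $|B|+|V|$) into $\mathrm{Sym}(V)$. Since $I-\Lambda$ is invertible this map is injective, so the $\Omega$-block has full column rank $|B|+|V|$. Let $W$ denote its image.

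Second, I would use the fact that $T:S\mapsto(I-\Lambda)^{\trans}S(I-\Lambda)$ is a linear automorphism of $\mathrm{Sym}(V)$, and that $T(W)$ is exactly the subspace $\mathrm{Sym}_{B\cup\text{diag}}$ of symmetric matrices with entries only on $B$ and on the diagonal. A natural linear complement of $\mathrm{Sym}_{B\cup\text{diag}}$ inside $\mathrm{Sym}(V)$ is $\mathrm{Sym}_N$, the symmetric matrices supported only on $N$, of dimension $|N|$. Hence
\[
\mathrm{rank}(J_G) \;=\; (|B|+|V|) \;+\; \mathrm{rank}\bigl(\pi_N\circ T\circ \partial\phi_G/\partial\Lambda\bigr),
\]
where $\pi_N$ is the projection onto $\mathrm{Sym}_N$ along $\mathrm{Sym}_{B\cup\text{diag}}$.

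The crux is then a single identity obtained by implicit differentiation. Since $g(\Lambda,\phi_G(\Lambda,\Omega))=\Omega$ and $g(\Lambda,\cdot)$ is linear with differential $T$, differentiating in $\lambda_{kl}$ at fixed $\Omega$ yields
\[
T\!\left(\frac{\partial \phi_G(\Lambda,\Omega)}{\partial \lambda_{kl}}\right) \;=\; -\,\frac{\partial g(\Lambda,\Sigma)}{\partial \lambda_{kl}}\bigg|_{\Sigma\text{ fixed}}.
\]
Applying $\pi_N$ to both sides picks out the $\{i,j\}$-entries for $\{i,j\}\in N$, which by~(\ref{lem2.1.1}) are, up to a global sign, precisely the entries of the $(k,l)$-th column of $\mathbf{J}(\Lambda,\Sigma)$. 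Thus $\pi_N\circ T\circ \partial\phi_G/\partial\Lambda$ and $\mathbf{J}(\Lambda,\Sigma)$ differ only by a global sign (and a relabeling of columns), so they have equal rank, and the lemma follows.

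I expect the main obstacle to be not the computation but the bookkeeping: writing the decomposition $\mathrm{Sym}(V)=\mathrm{Sym}_{B\cup\text{diag}}\oplus\mathrm{Sym}_N$ unambiguously, making sure that $T$ really does carry $W$ onto $\mathrm{Sym}_{B\cup\text{diag}}$ (so that ranks modulo $W$ and ranks modulo $\mathrm{Sym}_{B\cup\text{diag}}$ agree), and handling that the diagonal entries $g_{ii}$, which are quadratic in $\Lambda$, play no role because $N$ excludes the diagonal. Once the identity from implicit differentiation is in hand, everything else reduces to a linear algebra check, and no use is made of the simplicity of $G$ at this stage.
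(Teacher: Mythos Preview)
Your argument is correct and is essentially the paper's own proof, recast in the language of subspace decompositions: your automorphism $T$ is the paper's $\partial g/\partial\Sigma$, your implicit differentiation of $g(\Lambda,\phi_G(\Lambda,\Omega))=\Omega$ is the paper's chain-rule computation, and your projection $\pi_N$ corresponds to the paper's block partition of rows into $(N,B\cup V)$, yielding the block form in~\eqref{eq:blockJacobian}. Your geometric phrasing via $\mathrm{Sym}(V)=\mathrm{Sym}_{B\cup\text{diag}}\oplus\mathrm{Sym}_N$ is arguably cleaner, since it makes explicit why the $(B\cup V)$-rows of $\partial g/\partial\Lambda$ are irrelevant to the rank (they lie in the span of the $\Omega$-columns after applying $T$), whereas the paper simply records a block-triangular form and reads off the rank.
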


\begin{proof}
On $\mathbb{R}^D_{\rm reg}\times \mathit{PD}(B)$, define the map
$$
h:(\Lambda,\Omega) \mapsto(\Lambda,\phi_G(\Lambda,\Omega)).
$$
Composing with $g$ from \eqref{eq:g}, we have 
\begin{align}\label{comp}
(g\circ h)(\Lambda,\Omega)=\Omega.
\end{align}
Differentiating this equation with respect to the free entries (i.e., nonzero) in $\Lambda$ gives
\begin{equation}
    \frac{\partial}{\partial \Lambda}
    g(\Lambda,\Sigma)|_{\Sigma=\phi_G(\Lambda,\Omega)}
    +\frac{\partial}{\partial \Sigma}
    g(\Lambda,\Sigma)|_{\Sigma=\phi_G(\Lambda,\Omega)}\frac{\partial}{\partial\Lambda}\phi_G(\Lambda,\Omega)\;=\;0.
\end{equation}
Similarly, differentiating with respect to the free entries of $\Omega$ gives
\begin{equation}
    \frac{\partial}{\partial \Sigma}
    g(\Lambda,\Sigma)|_{\Sigma=\phi_G(\Lambda,\Omega)}\frac{\partial}{\partial\Omega}\phi_G(\Lambda,\Omega)\;=\;
    \begin{pmatrix}
0\\
I_{|B|+|V|}
\end{pmatrix}.
\end{equation}
Here, the rows are indexed by unordered pairs $\{i,j\}$, due to the symmetry of the matrices in \eqref{comp}.  In the partitioning of the rows, the pairs in $N$ are listed first.

By (\ref{lem2.1.1}), again ordering the pairs in $N$ first, we have
\begin{align*}
\frac{\partial}{\partial\Lambda} g(\Lambda,\Sigma) = 
\begin{pmatrix}
\mathbf{J}(\Lambda,\Sigma) \\
0
\end{pmatrix}.
\end{align*}
For the Jacobian  $J_G=(\frac{\partial\phi_G}{\partial\Lambda},\frac{\partial\phi_G}{\partial\Omega})$, we obtain that
\begin{equation}
\label{eq:blockJacobian}
\frac{\partial}{\partial\Sigma} g(\Lambda,\Sigma)|_{\Sigma=\phi_G(\Lambda,\Omega)} \cdot J_G(\Lambda,\Omega)
=
\begin{pmatrix}
-\mathbf{J}(\Lambda,\Sigma)|_{\Sigma=\phi(\Lambda,\Omega)} & 0\\
0 & I_{|B|+|V|}
\end{pmatrix},
\end{equation}
with rows and columns partitioned as $(N, B\cup V)$ and $(D, B\cup V)$, respectively. Restricting $g$ by fixing $\Lambda\in\mathbb{R}^D_\mathrm{reg}$ gives the bijection $\Sigma \mapsto (I-\Lambda)^{T}\Sigma(I-\Lambda)$.  Hence, the matrix $\frac{\partial g}{\partial\Sigma}$ is invertible. It follows that the rank of $J_G$ equals the rank of the partitioned matrix on the right-hand side of \eqref{eq:blockJacobian}, which in turn equals
$\mathrm{rank}\left(\mathbf{J}\left(\Lambda,\Sigma\right)\right)+|B|+|V|$,
as was our claim.
\end{proof}

We now consider the Jacobian $J_G(0,I)$ obtained by specializing $\Lambda=0$ and $\Omega=I$.

\begin{lemma}\label{lem2.1.3}
 The Jacobian $J_G(0,I)$ has full column rank $|V|+|B|+|D|$ if and only if the mixed graph $G$ is simple.
 \end{lemma}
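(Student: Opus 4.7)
The plan is to reduce the statement to a direct inspection of the matrix $\mathbf{J}(0,I)$ using Lemma~\ref{lem2.1.2}, and then read off the rank from the sparsity pattern of its columns.

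First I would note that setting $\Lambda=0$ and $\Omega=I$ yields $\Sigma=\phi_G(0,I)=I$, so by Lemma~\ref{lem2.1.2} it suffices to show that $\mathrm{rank}(\mathbf{J}(0,I))=|D|$ if and only if $G$ is simple. Next, I would evaluate formula (\ref{lem2.1.1}) at $(\Lambda,\Sigma)=(0,I)$. Since $(I-\Lambda)^{T}\Sigma=I$, we get
\[
\mathbf{J}(0,I)_{\{i,j\},(k,l)} \;=\; \begin{cases} -\delta_{jk} & \text{if } l=i,\\ -\delta_{ik} & \text{if } l=j,\\ 0 & \text{if } l\notin\{i,j\}.\end{cases}
\]
Both of the nonzero cases force $\{i,j\}=\{k,l\}$. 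Hence each column of $\mathbf{J}(0,I)$ indexed by an edge $k\toblue l\in D$ has at most one nonzero entry, which is a $-1$ located in row $\{k,l\}$, and only if $\{k,l\}\in N$.

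From this description, the rank of $\mathbf{J}(0,I)$ is precisely the number of edges $k\toblue l\in D$ for which (i) the row $\{k,l\}$ lies in $N$, meaning $\{k,l\}\notin B$, and (ii) no other edge $k'\toblue l'\in D$ satisfies $\{k',l'\}=\{k,l\}$, so that the corresponding $-1$'s sit in distinct rows. Condition (i) says that $D$ contains no bow (no directed edge parallel to a bidirected edge), while condition (ii) rules out a pair of oppositely oriented directed edges between the same two vertices (a 2-cycle); together these two conditions are exactly the definition of a simple mixed graph.

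Therefore $\mathbf{J}(0,I)$ has full column rank $|D|$ if and only if $G$ is simple, and combining with Lemma~\ref{lem2.1.2} gives $\mathrm{rank}(J_G(0,I))=|D|+|B|+|V|$ under precisely this condition. The only subtle point, and essentially the only place care is needed, is to recognize that the restriction of the row index to $N=\{\{i,j\}\notin B\}$ is what disqualifies bows, while the use of \emph{unordered} row labels $\{i,j\}$ is what collapses the two columns of a 2-cycle onto the same row; beyond this, the argument is a direct computation.
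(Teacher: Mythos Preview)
Your approach is the same as the paper's: reduce via Lemma~\ref{lem2.1.2} to the rank of $\mathbf{J}(0,I)$, observe that each column is either zero (if $\{k,l\}\in B$) or equals $-e_{\{k,l\}}$, and conclude that full column rank holds precisely when there are no bows and no 2-cycles.

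One sentence needs correcting. You write that ``the rank of $\mathbf{J}(0,I)$ is precisely the number of edges $k\toblue l\in D$ for which (i) and (ii) hold.'' This is false as stated: for a single 2-cycle with no bidirected edge, the rank is $1$ but your count is $0$. What is true---and all you need---is that $\mathbf{J}(0,I)$ has \emph{full column rank} $|D|$ if and only if every edge in $D$ satisfies (i) and (ii); equivalently, the columns (which are either zero or scaled standard unit vectors) are linearly independent exactly when none is zero and no two coincide. With this rephrasing your argument is complete and matches the paper's proof.
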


\begin{proof}
  The choice $\Lambda=0$ and $\Omega=I$ yields covariance matrix $\Sigma=\phi_G(0,I)=I$.  
  By Lemma~\ref{lem2.1.2}, it thus suffices to show that the matrix $\mathbf{J}(0,I)$ defined in \eqref{lem2.1.1} has full column rank $|D|$ if and only if $G$ is simple.
  
  Suppose $G$ is simple. Let $k\to l\in D$ be any directed edge indexing a column of $\mathbf{J}(0,I)$.  With $\Lambda=0$ and $\Sigma=I$, we have $-(I-\Lambda)^T\Sigma=-I$ and the considered column contains precisely one nonzero entry, namely,
  \[
    \mathbf{J}(0,I)_{\{k.l\},(k,l)}
    = -1;
  \]
  note that $k\to l\in D$ implies $\{k,l\}\in N$ if $G$ is simple.  Arranging the row indices $\{k,l\}$ for $k\to l\in D$ first, it becomes evident that $\mathbf{J}(0,I)$ has rank $|D|$ as 
  \begin{align*}
\mathbf{J}(0,I)=
\begin{pmatrix}
-I_D\\
0
\end{pmatrix}.
\end{align*}
  
  Conversely, suppose that $G$ is not simple, with $k$ and $l$ being two nodes joined by at least two edges.  Without loss of generality, assume that one of these edges is $k\to l\in D$.  We distinguish two cases.  First, suppose $k\leftrightarrow l\in B$.  Then $\{k,l\}\notin N$.  Therefore, the column of $\mathbf{J}(0,I)$ that is indexed by $(k,l)$ is zero, which implies that the rank of $\mathbf{J}(0,I)$ is smaller than $|D|$.  Second, suppose $k\leftrightarrow l\notin B$ but $l\to k\in D$. Then the two columns of $\mathbf{J}(0,I)$ indexed by $(k,l)$ and $(l,k)$ are identical.  Again the rank of $\mathbf{J}(0,I)$ is smaller than $|D|$. 
\end{proof}

\begin{proof}[Proof of Theorem \ref{thm:dim}]
    If $G$ is simple, then Lemma~\ref{lem2.1.3} implies that the maximal rank of the Jacobian $J_G$ is 
     $|V|+|D|+|B|$, which equals the count of parameters.
\end{proof}

For simple acyclic mixed graphs, having the same skeleton is a necessary condition for distributional equivalence  \cite{nowzohour:2017}, but this condition is not necessary for cyclic graphs \cite{ghassami2019characterizing}.  However, by Theorem \ref{thm:dim}, two distributionally equivalent simple cyclic mixed graphs must at least have the same number of edges.

\section{Sufficient Conditions for Distributional Equivalence}
\label{sec:sufficient}

 In this section, we show that the sufficient condition for distributional equivalence from \cite{nowzohour:2017} admits an extension to our setting of possibly cyclic graphs. To this end, we define $\overline{\mathcal{M}_G}$ to be the closure of $\mathcal{M}_G$ (in Euclidean topology). Two mixed graphs $G_1$ and $G_2$ are then distributionally equivalent up to closure if $\overline{\mathcal{M}_{G_1}}=\overline{\mathcal{M}_{G_2}}$.

\begin{theorem} \label{thm:equivalence}
 Let $G_{1}$ and $G_{2}$ be two simple mixed graphs with same skeleton and collider triples. Then $G_1$ and $G_2$ are distributionally equivalent up to closure.
 \end{theorem}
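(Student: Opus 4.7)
The plan is to combine the dimension matching from Theorem~\ref{thm:dim} with an analysis of elementary local moves. Since $G_1$ and $G_2$ share a skeleton, we have $|D_1|+|B_1|=|D_2|+|B_2|$, and Theorem~\ref{thm:dim} yields $\dim\mathcal{M}_{G_1}=\dim\mathcal{M}_{G_2}$. Each closure $\overline{\mathcal{M}_{G_i}}$ is irreducible, as the Euclidean closure of the image of a connected semi-algebraic parameter space under the rational map $\phi_{G_i}$. Two equidimensional irreducible varieties, one contained in the other, must coincide, so the theorem reduces to proving a single inclusion, say $\overline{\mathcal{M}_{G_1}}\subseteq\overline{\mathcal{M}_{G_2}}$.

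I would next introduce \emph{elementary moves} on simple mixed graphs that preserve both the skeleton and all collider triples. Writing $H_v^G$ for the set of edges at $v$ on which $v$ is a head, equality of the collider triples forces $H_v^{G_1}=H_v^{G_2}$ whenever $\max\{|H_v^{G_1}|,|H_v^{G_2}|\}\geq 2$; the only vertices where the two graphs may disagree are those with at most one head-incident edge on each side. Two move types then suffice: (a) reversing a directed edge $i\to j$ into $j\to i$ when $i$ and $j$ each have no other head-incident edge in the relevant graph, and (b) swapping $i\to j$ with $i\leftrightarrow j$ when $i$ has no other head-incident edge. A combinatorial induction, sweeping through the vertices and aligning head-sets one at a time, should show that any two simple mixed graphs with the same skeleton and collider triples are connected by a finite sequence of such moves.

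I would then verify that each elementary move preserves the model closure. For a bow swap $i\to j\mapsto i\leftrightarrow j$, given parameters $(\Lambda',\Omega')$ for the graph with the bidirected edge, I would construct a family $(\Lambda_n,\Omega_n)$ for the graph with the directed edge in which $\omega_{i,n}$ grows while $\lambda_{ij,n}$ shrinks in a coordinated way, with $\omega_{i,n}\lambda_{ij,n}\to\omega'_{ij}$, and the other parameters adjusted so that $\phi_G(\Lambda_n,\Omega_n)\to\phi_{G'}(\Lambda',\Omega')$. The trek-rule expansion of $\Sigma$ makes this limit transparent: the contribution of the top-$\omega_i$ paired with the directed edge $i\to j$ approaches that of the bidirected edge at the top. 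For a reversal move, an analogous trek-by-trek substitution works, exploiting the restricted local head structure at both endpoints of the reversed edge.

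The main obstacle I expect is carrying out the parameter substitution or limit when the modified edge participates in a directed cycle: the trek-rule expression then involves a geometric series over cyclic walks, and the required identity becomes a nontrivial rational one that may hold only after taking closure, which is exactly what the theorem statement allows. The combinatorial step of connecting the two graphs by elementary moves may also require care when many vertices simultaneously have ambiguous head structure, since one must sequence the moves so that every intermediate graph still shares the same skeleton and collider triples as $G_1$ and $G_2$.
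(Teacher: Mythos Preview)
Your reduction to a single inclusion via irreducibility and equal dimension does not work for Euclidean closures. The closures $\overline{\mathcal{M}_{G_i}}$ are semi-algebraic but not Zariski-closed, and two equidimensional semi-algebraic sets with the same (irreducible) Zariski closure can have distinct Euclidean closures. The paper's own Example after Theorem~\ref{thm:equivalence} illustrates this: the directed $3$-cycle and the acyclic graph in Figure~\ref{fig:two_graph} both give $6$-dimensional models with Zariski closure the full symmetric matrix space, yet $\mathcal{M}_{G_1}\subsetneq\mathcal{M}_{G_2}=\mathit{PD}$. So you do need both inclusions; symmetry of your move-based argument would supply this only if the moves suffice.

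They do not. Take $G_1=\{1\to 2,\,2\to 3,\,3\to 1\}$ and $G_2=\{2\to 1,\,3\to 2,\,1\to 3\}$. Both are simple, share the triangle skeleton, and have no collider triples, so Theorem~\ref{thm:equivalence} applies. Yet neither of your elementary moves can be performed on $G_1$: every vertex already has exactly one head-incident edge, so reversing any $i\to j$ gives $i$ a second head and creates a collider at $i$, and swapping $i\to j$ to $i\leftrightarrow j$ does the same. Your two moves therefore cannot connect $G_1$ to $G_2$ while staying inside the prescribed skeleton/collider class; at minimum a ``reverse an entire chordless directed cycle'' move is required, and establishing the model inclusion for that compound move is exactly the cyclic difficulty you anticipated. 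Separately, the bow-swap limit you sketch (sending $\omega_{i,n}\to\infty$ while $\lambda_{ij,n}\to 0$) makes $\Sigma_{ii}\to\infty$ and does not converge in $\mathit{PD}$.

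The paper proceeds globally rather than through local moves. It first reduces to correlation matrices (Lemma~\ref{lem:standardized}), then copies \emph{all} edge labels from $G_1$ to $G_2$ at once (Definition~\ref{eq:edgelabel}); Lemma~\ref{lem:determinante} shows this preserves $\det(I-\Lambda)$ because shared collider triples force directed cycles to either persist or reverse in full. A linear system (invertible when $\det\mathcal{H}(\Lambda_2)\neq 0$) then determines the diagonal of $\Omega_2$, and the simple-trek expansion~\eqref{eq:thmoff}, valid for $\rho(\Lambda)<1$, verifies the off-diagonal entries. The resulting algebraic map agrees with $\phi_{G_1}$ on a nonempty open set containing $(0,I)$, and a measure-zero/continuity argument upgrades this to the closure statement, with the reverse inclusion by symmetry.
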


Note that the likelihood functions of two models that are equal up to closure have the same supremum.
While our proof of Theorem \ref{thm:equivalence} (developed in \S\ref{subsec:useful}-\ref{subsec:constructing}) concludes equality up to closure, we do not know any examples where the models are not exactly equal.  The condition in Theorem \ref{thm:equivalence} is also far from being necessary, e.g., it does not include the well-known characterization of Markov equivalence for directed acyclic graphs (DAGs).  However, the theorem is useful to assert equivalence in our simulations (Section~\ref{subsec:sims}).  We are also not aware of better (tractable) conditions in the literature.  Indeed, distributional equivalence for cyclic mixed graphs is a subtle problem as the following example shows.

\begin{example}
  Let $G_1$ and $G_2$ be the two simple mixed graphs displayed in Figure~\ref{fig:two_graph}(a) and (b), respectively.  By Theorem \ref{thm:dim}, both $\mathcal{M}_{G_1}$ and $\mathcal{M}_{G_2}$ are full-dimensional (i.e., 6-dimensional) subsets of the cone of positive definite $3\times 3$  matrices.  Graph $G_2$ is acyclic, and $\mathcal{M}_{G_2}$ is easily seen to be equal to $\mathit{PD}$.  However, as observed in \cite{drton:mlthreshold:2019}, the set $\mathcal{M}_{G_1}$ is a strict subset of $\mathcal{M}_{G_2}=\mathit{PD}$.
\end{example}

\subsection{Useful Lemmas}
\label{subsec:useful}

Let $G_1=(V,D_1,B_1)$ and $G_2=(V,D_2,B_2)$ be two mixed graphs.  Let $(\Lambda_1,\Omega_1)\in\mathbb{R}^{D_1}_{\mathrm{reg}}\times \mathit{PD}(B_1)$ be parameters for $G_1$.  The essence of the proof of Theorem \ref{thm:equivalence} is a strategy to find parameters $(\Lambda_2,\Omega_2)\in\mathbb{R}^{D_2}_{\mathrm{reg}}\times \mathit{PD}(B_2)$ such that $\phi_{G_2}(\Lambda_2,\Omega_2) = \phi_{G_1}(\Lambda_1,\Omega_1)$.  The key steps of the construction are a reduction to correlation matrices and an edge-relabeling considered in the acyclic case by \cite{nowzohour:2017}.  However, the cyclic case brings about new subtleties in this approach.

Let $\mathcal{R}: \mathit{PD} \to \mathit{PD}$ be the standardization map that takes covariance matrices to correlation matrices via $\mathcal{R}(\Sigma)_{ij} = \frac{\Sigma_{ij}}{\sqrt{\Sigma_{ii}\Sigma_{jj}}}$.

\begin{lemma} \label{lem:standardized}
Let $G=(V,D,B)$ be simple and $\Sigma \in \mathit{PD}$. Then 
$\Sigma \in \mathcal{M}_G  \, \text{ if and only if } \, \mathcal{R}(\Sigma) \in \mathcal{M}_G$.
\end{lemma}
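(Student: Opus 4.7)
My plan is to exhibit an explicit rescaling that maps a parameterization $(\Lambda,\Omega)$ for $\Sigma$ to a parameterization $(\tilde\Lambda,\tilde\Omega)$ for the correlation matrix $\mathcal{R}(\Sigma)$, and vice versa, while preserving the supports required by $G$. Let $D = \mathrm{diag}(\sqrt{\Sigma_{ii}})$, so that $\mathcal{R}(\Sigma) = D^{-1}\Sigma D^{-1}$. The key observation is that conjugation by $D$ preserves both the set $\mathbb{R}^D$ and the set $\mathit{PD}(B)$ up to a diagonal rescaling, because for any diagonal matrix $D$ the $(i,j)$-entry of $D^{-1}\Lambda D$ is $D_{ii}^{-1}\lambda_{ij} D_{jj}$, which is nonzero precisely when $\lambda_{ij}$ is.

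For the forward implication, suppose $\Sigma = \phi_G(\Lambda,\Omega)$. I set
\[
\tilde\Lambda = D\Lambda D^{-1}, \qquad \tilde\Omega = D^{-1}\Omega D^{-1}.
\]
Then $\tilde\Lambda\in\mathbb{R}^D$ (the diagonal of $\Lambda$ being zero is preserved, and the off-diagonal support is preserved by the remark above), $\tilde\Omega\in\mathit{PD}(B)$ (positive definiteness is preserved by congruence with the invertible matrix $D^{-1}$, and the off-diagonal support is preserved), and $I-\tilde\Lambda = D(I-\Lambda)D^{-1}$ is invertible since $I-\Lambda$ is. A short calculation then gives
\[
(I-\tilde\Lambda)^{-T}\tilde\Omega(I-\tilde\Lambda)^{-1} = D^{-T}(I-\Lambda)^{-T}D\cdot D^{-1}\Omega D^{-1}\cdot D(I-\Lambda)^{-1}D^{-1} = D^{-1}\Sigma D^{-1} = \mathcal{R}(\Sigma),
\]
so $\mathcal{R}(\Sigma)\in\mathcal{M}_G$.

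For the reverse implication, I run the same construction with the roles of $D$ and $D^{-1}$ swapped. If $\mathcal{R}(\Sigma) = \phi_G(\Lambda_C,\Omega_C)$, then setting $\Lambda = D^{-1}\Lambda_C D$ and $\Omega = D\Omega_C D$ gives valid parameters in $\mathbb{R}^D_{\mathrm{reg}}\times \mathit{PD}(B)$ by the same support/positive-definiteness check, and $\phi_G(\Lambda,\Omega) = D\mathcal{R}(\Sigma)D = \Sigma$. There is no real obstacle here; the only thing worth double-checking carefully is the bookkeeping on diagonal conjugation to confirm that the nonzero pattern of $\tilde\Lambda$ matches that of $\Lambda$ and likewise for $\tilde\Omega$. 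Simplicity of $G$ plays no essential role in the argument beyond being the standing assumption of the paper.
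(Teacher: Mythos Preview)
Your proof is correct and essentially identical to the paper's: both construct the diagonal scaling from the variances (your $D$ is the inverse of the paper's $\Delta$) and verify that conjugating $\Lambda$ and congruently rescaling $\Omega$ yields valid parameters for $\mathcal{R}(\Sigma)$ with the same support. The paper writes out only one direction and remarks that the converse is similar, whereas you spell out both; your observation that simplicity of $G$ plays no essential role in this particular lemma is also correct.
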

\begin{proof}
We show one direction as the converse can be verified similarly. If $\Sigma \in \mathcal{M}_G $ then 
$$\Sigma=\phi_G(\Lambda,\Omega)=(I-\Lambda)^{-T} \Omega (I-\Lambda)^{-1}. $$
for some matrices $\Lambda \in \mathbb{R}^{D}_{\text{reg}} \, , \Omega \in PD(B)$. Setting $\Delta$ diagonal with entries $\Delta_{ii} = \Sigma_{ii}^{-\frac{1}{2}}$, it holds that
\begin{equation*}
\mathcal{R}(\Sigma) = \Delta \Sigma  \Delta
   = (\Delta^{-1}-\Delta^{-1}\Lambda)^{-T} \Omega (\Delta^{-1}-\Delta^{-1}\Lambda)^{-1}
     =  \phi_G(\tilde{\Lambda},\tilde{\Omega})
\end{equation*}
with $\tilde{\Lambda} = \Delta^{-1} \Lambda \Delta \in \mathbb{R}^{D}_{\text{reg}} \, , \,
\tilde{\Omega}=\Delta \Omega \Delta \in PD(B) $.
\end{proof}

Throughout the rest of this section, let $G_1=(V,D_1,B_1)$ and $G_2=(V,D_2,B_2)$ be two mixed graphs.  If the graphs have the same skeleton, then there is a natural way to copy the edge labels from one graph to the other. To describe the procedure, we decompose an error covariance matrix, $\Omega$,  into its diagonal and off-diagonal parts, denoted $\Omega^{d}$ and $\Omega^{od}$, respectively.  So, $\Omega = \Omega^{d} + \Omega^{od}$.

\begin{definition}
\label{eq:edgelabel}
Let $G_1$ and $G_2$ be simple mixed graphs with the same skeleton.  Given a choice $(\Lambda_{1},\Omega_{1})\in\mathbb{R}^{D_1}_{\mathrm{reg}}\times \mathit{PD}(B_1)$, the \emph{induced edge labeling on $G_2$} is the pair of matrices $(\Lambda_{2},\Omega_{2}^{od})$ obtained as
\begin{align*}
(\Lambda_{2})_{ij} \, =\begin{cases}
(\Lambda_{1})_{ij} & \text{if } i \rightarrow j \in G_{1}, \, i \rightarrow j \in G_{2},\\
(\Lambda_{1})_{ji} & \text{if } i \leftarrow j \in G_{1}, \, i \rightarrow j \in G_{2},\\
(\Omega_{1})_{ij} & \text{if } i \leftrightarrow j \in G_{1}, \, i \rightarrow j \in G_{2},\\
0 & \text{if } i \rightarrow j \notin G_{2},
\end{cases}
\end{align*}
\vspace{-0.3cm}
\begin{align*}
(\Omega_{2}^{od})_{ij}=\begin{cases}
(\Lambda_{1})_{ij} & \text{if } i \rightarrow j \in G_{1}, i \leftrightarrow j \in G_{2},\\
(\Lambda_{1})_{ji} & \text{if } i \leftarrow j \in G_{1}, i \leftrightarrow j \in G_{2},\\
(\Omega_{1})_{ij} & \text{if } i \leftrightarrow  j \in G_{1}, i \leftrightarrow j \in G_{2},\\
0 & \text{if } i \leftrightarrow j \notin G_{2} \ \text{or} \ i=j.
\end{cases}
\end{align*}
\end{definition}

For the construction from Definition~\ref{eq:edgelabel}, it holds that $\Lambda_2\in\mathbb{R}^{D_2}_{\mathrm{reg}}$.  Moreover, $\Omega_{2}^{od}$ can be turned into a matrix in $\mathit{PD}(B_2)$ by addition of a diagonal matrix.

\begin{lemma}
\label{lem:determinante}
Let $G_{1}$ and $G_{2}$ be simple mixed graphs with same skeleton and collider triples, and let  $(\Lambda_{i},\Omega_{i}) \in \mathbb{R}^{D_i} \times \mathit{PD}(B_i)$ for $i=1,2$.  If  $(\Lambda_{2},\Omega_{2}^{od})$ equals the edge labeling induced by $(\Lambda_{1},\Omega_{1})$ then
$$\det(I-\Lambda_{1})=\det(I-\Lambda_{2}).$$ 
In particular, if $\Lambda_1 \in \mathbb{R}^{D_1}_{\text{reg}}$ then $\Lambda_2 \in \mathbb{R}^{D_2}_{\text{reg}}$.
\end{lemma}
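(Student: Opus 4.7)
The plan is to compare both determinants via the Leibniz formula and exhibit a weight-preserving bijection between their cycle covers.

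Expanding $\det(I-\Lambda_i) = \sum_\pi \text{sgn}(\pi) \prod_{v \in V}(I - \Lambda_i)_{v, \pi(v)}$ and using that $\Lambda_i$ has zero diagonal, the only contributing permutations $\pi$ are those whose non-fixed part decomposes into a vertex-disjoint union of directed cycles $C_1,\dots,C_m$ of $G_i$. A short sign calculation shows the contribution is $(-1)^m \prod_j w_i(C_j)$, where $w_i(C)$ denotes the product of $\Lambda_i$-entries along $C$. It therefore suffices to biject the directed cycles of $G_2$ with those of $G_1$ on the same vertex sets, preserving weights; the extension to cycle covers is automatic because the vertex sets, hence the disjointness, are shared.

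The key structural step: given a directed cycle $C : v_1 \to v_2 \to \cdots \to v_k \to v_1$ in $G_2$, the subgraph of $G_1$ on the same skeleton edges $\{v_i, v_{i+1}\}$ is also a directed cycle, either $C$ itself or its reverse. Indeed, each triple $(v_{i-1}, v_i, v_{i+1})$ is a non-collider in $G_2$ and hence, by hypothesis, in $G_1$, so at every $v_i$ at most one of the two incident cycle-edges has a head at $v_i$ in $G_1$. Counting arrowheads along the cycle---each directed edge contributes one, each bidirected edge two---yields $d + 2b \le k$ while $d + b = k$, forcing $b = 0$ and then exactly one incoming arrow per vertex; this pins down the orientation up to global reversal. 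The weight equality $w_2(C) = w_1(C')$ follows directly from Definition~\ref{eq:edgelabel}: same-orientation gives $(\Lambda_2)_{v_i, v_{i+1}} = (\Lambda_1)_{v_i, v_{i+1}}$ edge-by-edge, while reverse-orientation gives $(\Lambda_2)_{v_i, v_{i+1}} = (\Lambda_1)_{v_{i+1}, v_i}$, whose product over $i$ matches $w_1$ of the reverse cycle.

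The main obstacle---and the new ingredient relative to the acyclic analysis in \cite{nowzohour:2017}---is precisely the arrowhead-counting argument that rules out bidirected edges along directed cycles and pins down the $G_1$-subgraph structure; in the acyclic case the cycle issue never arises. Symmetry in $G_1$ and $G_2$ upgrades the construction to a genuine bijection between cycle covers preserving $m$ and $\prod_j w_i(C_j)$, so the two Leibniz sums agree and $\det(I-\Lambda_1) = \det(I-\Lambda_2)$. The ``in particular'' clause is then immediate: $\Lambda_1 \in \mathbb{R}^{D_1}_{\mathrm{reg}}$ gives $\det(I-\Lambda_1) \neq 0$, hence $\det(I-\Lambda_2) \neq 0$, so $\Lambda_2 \in \mathbb{R}^{D_2}_{\mathrm{reg}}$.
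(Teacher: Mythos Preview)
Your proof is correct and follows essentially the same approach as the paper: both expand $\det(I-\Lambda_i)$ via the Leibniz/cycle formula and argue that the non-collider constraints force each directed cycle of one graph to appear in the other either identically or fully reversed, with matching edge weights under the induced labeling. Your arrowhead-counting argument ($d+2b\le k$ together with $d+b=k$ forcing $b=0$ and then exactly one incoming arrow per vertex) is a clean, self-contained justification of the structural step that the paper states more tersely; the paper additionally remarks that a cycle with a reversed edge must be chordless, which is true but not needed in your formulation.
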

\begin{proof}
The determinants depend on the values of cycle products \cite[Lemma 1]{BCD}.  Let $S_V$ be the group of permutations of the nodes in $V$.  For $\sigma \in S_V$, let $V(\sigma)$ be the set of nodes contained in a non-trivial cycle of $\sigma$.  Then
\begin{equation}\label{eq:detcycle}
\det(I-\Lambda)=\sum_{\sigma \in S_{V}(G)}(-1)^{\mathrm{sgn}(\sigma)} \prod_{i \in V(\sigma)} \Lambda_{\sigma(i),i}
\end{equation}
where $S_{V}(G)$ is the subset of permutations such that $i=\sigma(i)$ or $i\rightarrow\sigma(i)\in D$ for all $i\in V$. We remark that even though the lemma in \cite{BCD} is stated for $\Lambda \in \mathbb{R}^{D}_{\text{reg}}$, the proof relies on Laplace expansion of the determinant which holds even if $I- \Lambda$ is not invertible.

Now, since collider triples are preserved, an edge that is part of a directed cycle of $G_1$ cannot be bidirected in $G_2$. Furthermore, if $G_1$ contains a cycle which has a directed edge that is reversed in $G_2$, then the cycle must be chordless in $G_1$ (that is, every node in the cycle can have only one child in the cycle) and must be fully reversed in $G_2$. Since the labels agree, the cycle products in \eqref{eq:detcycle} remain unchanged and therefore $\det(I-\Lambda_{1})=\det(I-\Lambda_{2})$.
\end{proof}

\subsection{Constructing Covariance Matrices}
\label{subsec:constructing}

The key to completing the proof of Theorem \ref{thm:equivalence} is to show that a correlation matrix obtained from a generic choice of parameters $(\Lambda_1,\Omega_1)\in\mathbb{R}^{D_1}_{\mathrm{reg}}\times \mathit{PD}(B_1)$ also belongs to $\mathcal{M}_{G_2}$. 
Let $\circ$ denote the Hadamard (entrywise) product of matrices, and define $\mathcal{H} : \mathbb{R}^{D}_{\text{reg}} \to \mathbb{R}^{V \times V}$ by
$$\mathcal{H}(\Lambda):=(I-\Lambda)^{-T} \circ (I-\Lambda)^{-T}.$$ 
We denote the spectral radius of a matrix $\Lambda$ by $\rho(\Lambda)$.

\begin{lemma}\label{lem:equal}
Let $G_1,G_2$ be simple mixed graphs with same skeleton and collider triples. Let $(\Lambda_1,\Omega_1) \in \mathbb{R}^{D_1}_{\text{reg}} \times PD(B_1)$ such that $\Sigma= \phi_{G_1}(\Lambda_1,\Omega_1) \in \mathcal{M}_{G_1}$ is a correlation matrix and consider the induced edge labeling $(\Lambda_2,\Omega_2^{od})$. If 
\begin{itemize}
    \item[(i)] $\rho(\Lambda_j)<1$ for $j=1,2$, and
    \item[(ii)] $\det(\mathcal{H}(\Lambda_2)) \neq 0$,
\end{itemize}
then there exists a unique diagonal matrix $\Omega_2^{d}$ such that with $\Omega_2=\Omega_2^d+\Omega_2^{od}$ it holds that $(\Lambda_2,\Omega_2) \in \mathbb{R}^{D_2}_{\text{reg}} \times PD(B_2) $ and $\Sigma = \phi_{G_2}(\Lambda_2,\Omega_2) \in \mathcal{M}_{G_2}$.
\end{lemma}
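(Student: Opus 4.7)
The plan is to define $\Omega_2^d$ via the diagonal equations (using (ii)) and then verify the off-diagonal equations (using (i) together with the shared skeleton/collider structure). Expanding the diagonal of the desired identity $\Sigma = (I-\Lambda_2)^{-T}\Omega_2(I-\Lambda_2)^{-1}$ with $\Omega_2 = \Omega_2^d + \Omega_2^{od}$ yields, for each $i\in V$,
\begin{equation*}
\Sigma_{ii} \,=\, \sum_k \mathcal{H}(\Lambda_2)_{ik}(\Omega_2^d)_{kk} + \sum_{k \neq l}[(I-\Lambda_2)^{-T}]_{ik}[(I-\Lambda_2)^{-T}]_{il}(\Omega_2^{od})_{kl}.
\end{equation*}
This is a linear system in the unknowns $((\Omega_2^d)_{kk})_{k \in V}$ with coefficient matrix $\mathcal{H}(\Lambda_2)$; by condition (ii) it has a unique solution, providing the candidate $\Omega_2^d$.

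Next, I would verify the off-diagonal equations $[\phi_{G_2}(\Lambda_2, \Omega_2)]_{ij} = \Sigma_{ij}$ for $i \neq j$. Since $I - \Lambda_j$ is invertible (Lemma~\ref{lem:determinante} and (i)), the Neumann expansion $(I-\Lambda_j)^{-1} = \sum_{n\geq 0}\Lambda_j^n$ converges absolutely, so both $\Sigma_{ij}$ and $[\phi_{G_2}(\Lambda_2,\Omega_2)]_{ij}$ admit absolutely convergent trek-rule expansions. Because $G_1$ and $G_2$ share skeleton and collider triples, a walk in the common skeleton has the same (non-)collider status at each internal vertex, so it is a trek in $G_1$ if and only if it is a trek in $G_2$, giving a natural bijection between treks. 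The induced edge labeling of Definition~\ref{eq:edgelabel} is designed so that, under this bijection, the $\lambda$-contributions along the two sides of a trek are preserved; combined with the diagonal calibration from the previous step, the top contributions then balance and the trek sums agree.

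Once the off-diagonal match is established, $\Omega_2 = (I-\Lambda_2)^T\Sigma(I-\Lambda_2)$ is automatically positive definite (because $\Sigma$ is and $I-\Lambda_2$ is invertible), and its off-diagonal support lies in $B_2$ by Definition~\ref{eq:edgelabel}, so $\Omega_2 \in \mathit{PD}(B_2)$. Uniqueness of $\Omega_2^d$ is immediate, since the identity $\Sigma = (I-\Lambda_2)^{-T}\Omega_2(I-\Lambda_2)^{-1}$ together with invertibility of $I-\Lambda_2$ uniquely determines $\Omega_2$ and hence its diagonal part.

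The main obstacle is the trek-bijection step: the ``top'' of a trek can shift when orientations or edge types change between $G_1$ and $G_2$, so the weights do not match trek-by-trek under the natural bijection, and a careful combinatorial regrouping of trek contributions is required. In the cyclic setting this is subtler than in the acyclic case of \cite{nowzohour:2017}, because treks may be non-simple and an edge in a directed cycle of $G_1$ may be reversed in $G_2$; the preservation of collider triples and the calibration of $\Omega_2^d$ via (ii) are the two ingredients that make the sum manipulation work.
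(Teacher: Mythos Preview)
Your diagonal step is exactly right and coincides with the paper's: the equations for $(\Omega_2^d)_{kk}$ form a linear system with coefficient matrix $\mathcal{H}(\Lambda_2)$, uniquely solvable by~(ii).

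The gap is in the off-diagonal step, and you effectively identify it yourself in the final paragraph. The ordinary trek rule obtained from the Neumann series carries, for each trek with a single top node $v_0$, a factor $\omega_{v_0 v_0}$ from the diagonal of $\Omega$. Since $\Omega_1^d$ and $\Omega_2^d$ are different (the latter was just manufactured to fix the diagonal of $\Sigma$), these top weights do not match under the induced edge labeling, and there is no evident regrouping that makes the two infinite sums agree. Your remark that the ``diagonal calibration'' makes ``the top contributions balance'' is not justified: the calibration enforces only $|V|$ scalar equations, not a trek-by-trek or family-by-family identity.

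The paper's proof avoids this difficulty entirely by invoking \cite[Theorem~4]{nowzohour:2017}: when $\phi_G(\Lambda,\Omega)$ is a \emph{correlation} matrix and $\rho(\Lambda)<1$, the off-diagonal entry $\phi_G(\Lambda,\Omega)_{ij}$ equals a finite sum over \emph{simple} treks from $i$ to $j$, and each summand is a product of edge weights only---entries of $\Lambda$ and off-diagonal entries of $\Omega$---with no diagonal $\omega_{kk}$ appearing. The induced edge labeling of Definition~\ref{eq:edgelabel} transfers precisely these edge weights, and same skeleton plus same collider triples puts the simple treks of $G_1$ and $G_2$ in bijection, so $\phi_{G_1}(\Lambda_1,\Omega_1)_{ij}=\phi_{G_2}(\Lambda_2,\Omega_2)_{ij}$ immediately, with no regrouping needed. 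This is the reason both hypothesis~(i) and the standing assumption that $\Sigma$ is a correlation matrix are present; your argument uses~(i) only for Neumann convergence and makes no use of the correlation hypothesis at all.

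Your closing observation that $\Omega_2=(I-\Lambda_2)^T\Sigma(I-\Lambda_2)$ is automatically positive definite once the full matrix identity holds is correct and worth keeping; the paper leaves this point implicit.
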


\begin{proof}
By Lemma \ref{lem:determinante}, we have indeed that $\Lambda_2 \in  \mathbb{R}^{D_2}_{\text{reg}}$. We need to construct $\Omega_2^{d}$ such that
\begin{equation*}
\phi_{G_2}(\Lambda_{2},\Omega_{2})
=(I-\Lambda_{2})^{-T} \Omega_{2}^{d} (I-\Lambda_{2})^{-1}
+ (I-\Lambda_{2})^{-T} \Omega_{2}^{od} (I-\Lambda_{2})^{-1} = \Sigma.
\end{equation*}
Since $\Sigma_{ii}=1$, this requires for all $i\in V$ that
\begin{equation*}
((I-\Lambda_{2})^{-T} \Omega_{2}^{d} (I-\Lambda_{2})^{-1})_{ii}
= 1-((I-\Lambda_{2})^{-T} \Omega_{2}^{od} (I-\Lambda_{2})^{-1})_{ii}.
\end{equation*}
Solving for the diagonal of $\Omega_2^d$ is equivalent (see \cite[Lemma 5.1.3]{horn_johnson_1991} ) to the linear system $Ax=b$ where
\begin{align*}
A=\mathcal{H}(\Lambda_2) = (I-\Lambda_{2})^{-T} \circ (I-\Lambda_{2})^{-T} 
\end{align*}
and the coordinates of the vector $b$ are $$b_i = 1-((I-\Lambda_{2})^{-T} \Omega_{2}^{od} (I-\Lambda_{2})^{-1})_{ii}.$$
By hypothesis, $\det(\mathcal{H}(\Lambda_2)) \neq 0$ and the system has a unique solution. 
It thus remains to show that $\phi_{G_2}(\Lambda_{2},\Omega_{2})$ also matches $\Sigma$ in all off-diagonal entries.

In general, if $\phi(\Lambda,\Omega)$ is a correlation matrix over a mixed graph $G$ and  $\rho(\Lambda)<1$, by \cite[Theorem 4]{nowzohour:2017},  the entries for $i \neq j$ are given by
\begin{equation}\label{eq:thmoff}
\phi_G(\Lambda,\Omega)_{ij}=\sum_{\tau \in S^{ij}}\prod_{s \rightarrow t \in \tau} \Lambda_{ts} \prod_{s \leftrightarrow t \in \tau} \Omega_{st},
\end{equation}  
where $S^{ij}_G$ is the set of simple treks from $i$ to $j$.  By assumption, $\rho(\Lambda_1),\rho(\Lambda_2)<1$, and we may apply the representation in \eqref{eq:thmoff} to $G_1$ and $G_2$.  In general,  $S_{G_1}^{ij}\not=S_{G_2}^{ij}$.  However, the fact that the graphs have the same skeleton and share collider triples implies that when replacing $(\Lambda,\Omega)$ by $(\Lambda_j, \Omega_j)$, $j=1,2$, in \eqref{eq:thmoff}, the induced edge labeling guarantees that the right hand sides of the expression are equal.  Hence,
$$\phi_{G_1}(\Lambda_1,\Omega_1) = \Sigma = \phi_{G_2}(\Lambda_2,\Omega_2)$$
as was the claim.
\end{proof}

With these preparations in place, we may complete the proof of the main result of this section.

\begin{proof}[Proof of Theorem \ref{thm:equivalence}]
      First, observe that the covariance parametrization $$\phi_{G_1}: \mathbb{R}^{D_1} \times PD(B_1) \to \mathcal{M}_{G_1} \subseteq \mathit{PD}$$
      is a rational map.  Next, consider the algebraic map $$\psi:  \mathcal{U} \subset \mathbb{R}^{D_1} \times PD(B_1) \to \mathcal{M}_{G_2} \subseteq \mathit{PD}$$  
    defined as follows. First, apply the standardization map on the parameter $(\Lambda_1, \Omega_1)$ to obtain $(\tilde{\Lambda}_1, \tilde{\Omega}_1,\Delta)$, as in the proof of Lemma \ref{lem:standardized}.  We denote this map by $\tilde{\mathcal{R}}$. As $(\tilde{\Lambda}_1, \tilde{\Omega}_1)$ define a correlation matrix we may obtain $(\tilde{\Lambda}_2, \tilde{\Omega}_2)$ from the procedure in Lemma \ref{lem:equal}, for representation of the same correlation matrix.  Finally, destandardize $(\tilde{\Lambda}_2, \tilde{\Omega}_2)$ with the matrix $\Delta$ from the standardization map, and apply $\phi_{G_2}$.
    
    Note that the map $\psi$ is well-defined for input that satisfies the two conditions in Lemma \ref{lem:equal}.  This domain includes an open subset $\mathcal{U} \subset \mathbb{R}^{D_1} \times PD(B_1)$.  This subset is nonempty because $(0,I)\in \mathcal{U}$.  The final application of $\phi_{G_2}$ to $(\Lambda_2, \Omega_2)$ gives a matrix in $\mathcal{M}_{G_2}$, which by construction and Lemma \ref{lem:equal} coincides with $\phi_{G_1}(\Lambda_1, \Omega_1)$. The diagram in Figure \ref{fig:diag} illustrates the situation. 
    
\begin{figure}[t]
\centerline{
\begin{xy}
\xymatrix{
    (\Lambda_{1},\Omega_{1}) \ar[r]^{\tilde{\mathcal{R}}} \ar[ddrr]_{\phi_{G_{1}}}  & (\tilde{\Lambda}_{1},\tilde{\Omega}_{1},\Delta) \ar[r]^H & (\tilde{\Lambda}_{2},\tilde{\Omega}_{2},\Delta) \ar[d]^{\tilde{\mathcal{R}}^{-1}}\\
  & & (\Lambda_{2},\Omega_{2}) \ar[d]^{\phi_{G_{2}}}\\
  & & \Sigma_{1} = \Sigma_{2}
  }
 \end{xy}} 
 \caption{\label{fig:diag} Commutative diagram illustrating two ways of obtaining a matrix $\Sigma \in \mathcal{M}_{G_1}\cap \mathcal{M}_{G_2}$. One is the parametrization $\phi_{G_{1}}$, while the other is a composition of maps (including the map $H$ defined via Lemma \ref{lem:equal}), that we denote $\psi$.}
\end{figure}

 The map $\psi$ is a composition of a rational map with algebraic maps that involve radicals (i.e., square roots in the standardization $\mathcal{R}$). Since $\psi$ coincides with the rational map $\phi_{G_{1}}$ on the open set $\mathcal{U}$, they must be equal outside of an algebraic hypersurface (i.e., the zero set of a multivariate polynomial). This exceptional set has Lebesgue measure zero (see, e.g., the lemma in \cite{Okamoto}).  
 Covariance matrices in $\mathcal{M}_{G_1}$ that are given by parameters $(\Lambda_1,\Omega_1)$ outside the exceptional set are also in $\mathcal{M}_{G_2}$.  We may conclude that $\mathcal{M}_{G_1} \subseteq\overline{\mathcal{M}_{G_2}}$ because the elements of the exceptional set are limits of sequences off the exceptional set.  By symmetry,  $\overline{\mathcal{M}_{G_1}}=\overline{\mathcal{M}_{G_2}}$ as claimed.
\end{proof}

\section{Greedy Search}
\label{sec:modelsel}

Based on our result on the dimension of models given by simple mixed graphs, we may form scores that trade off model dimension and model fit.  For model selection, we may then maximize such a score over the considered set of graphs.  Given the large number of possible graphs, we follow prior work and consider a greedy search that starts from some initial graph and iteratively selects the highest-scoring graph from a local neighborhood of graphs. The procedure stops when no higher score can be found in the local neighborhood or a fixed maximum number of iterations is reached.  To mitigate getting trapped in local optima, the search is running from different (random) starting points.  In the present context, we take the local neighborhood of a graph $G$ to be the union of all simple mixed graphs that can be obtained from $G$ by adding one edge, by removing one edge, or by reversing one directed edge; compare \cite{nowzohour:2017}.

Let $\mathbf{X}\in\mathbb{R}^{n\times p}$ be a data matrix, assumed to hold in its rows the realizations of $n$ i.i.d.~and centered Gaussian random vectors.  Let $S=\mathbf{X}^T\mathbf{X}/n$ be the sample covariance matrix. The Gaussian log-likelihood function is
\begin{align*}
    \ell(\Sigma; S) = -\frac{n}{2} \left[\log\det(2\pi\Sigma)+ 
    \tr({\Sigma}^{-1} S)\right].
\end{align*}
Our proposed score for a mixed graph $G=(V,D,B)$ then takes the form
\begin{align}\label{score}
s(G)=
&\frac{1}{n}\left(\max_{\Sigma\in\mathcal{M}_G}\ell(\Sigma; S) - \text{penalty}(p,k,n)\right),
\end{align}
where $p=|V|$ and $k=|D|+|B|$ is the number of edges.  To compute the maximum log-likelihood in \eqref{score} we apply the block coordinate-descent algorithm from \cite{BCD}.  The standard Bayesian information criterion (BIC) uses $\text{penalty}(p,k,n)=\tfrac{1}{2}(p+k)\log(n)$; here $p+k$ is the model dimension. The authors of \cite{nowzohour:2017} double this penalty when searching over acyclic simple mixed graphs as it improved performance in experiments.  An increased penalty is supported by related work on selecting sparse graphical models \cite{Foygel:2010}. 

There the increased penalty is induced from a prior distribution over graphs under which the number of edges is uniformly distributed.  Adopting these ideas in our case we propose to define the score $s(G)$ from \eqref{score} with 
\begin{equation}
\label{eq:penalty}
\text{penalty}(p,k,n)=\tfrac{1}{2}(p+k)\log(n) + \log(p^{2k} 3^{k}).
\end{equation}
The last term reflects that there are $\binom{p(p+1)/2}{k} 3^k\sim p^{2k} 3^k$ simple mixed graphs with $k$ edges.  This penalty is formulated with a view towards sparser graphs as encountered in the application we consider in Section~\ref{subsec:protein}.  
We will also explore its use in simulated non-sparse problems of small scale (Section \ref{subsec:sims}). Note also that the proposed penalty ignores the issue of distributional equivalence, i.e., different graphs inducing the same model $\mathcal{M}_G$.  Unfortunately this equivalence issue is still poorly understood.

\section{Numerical Experiments}
\label{sec:numerical}

In this section we present numerical experiments, in which we apply the proposed greedy search to simulated data and well-known protein expression data \cite{sachs:2005}.

\subsection{Simulation Studies}
\label{subsec:sims}

We consider graphs with $p\in\{5,6\}$ nodes.  In each case generate $100$ simple mixed graphs uniformly at random using an MCMC algorithm; in analogy to \cite{nowzohour:2017}.  The parameters for the graphs' edges are sampled uniformly from $[-0.9, -0.5]\cup[0.5, 0.9]$. The diagonal entries in $\Omega$ are set by adding independent $\chi^2_1$ draws to the absolute row sums to ensure positive definiteness by diagonal dominance.  For each graph, we generate three Gaussian data sets of size $n \in \{10^2, 10^3, 10^4\}$.  

\begin{table}[htp]
\centering
\vspace{0.25cm}
\begin{tabular}{ccccccc}

\toprule

\textbf{BIC} & \textbf{n}             & \textbf{Start} & \textbf{Dim} & \textbf{Skel} & \textbf{Skel \& Coll}        & \textbf{SHD*}
                                      \\ \hline
\multirow{6}{*}{1}     & \multirow{2}{*}{$10^2$}   & \textbf{R}                      & \textbf{0.39}        & \textbf{0.13}         & \textbf{0.07} & \textbf{3.79}                             \\ 
                       &                        & TG                     & 0.8                  & 0.8                   & 0.25          & 1.15                                      \\ \cline{2-7} 
                       & \multirow{2}{*}{$10^3$}  & \textbf{R}                      & \textbf{0.63}        & \textbf{0.43}         & \textbf{0.26} & \textbf{2.44}                             \\ 
                       &                        & TG                     & 0.88                 & 0.88                  & 0.53          & 0.63                                      \\ \cline{2-7} 
                       & \multirow{2}{*}{$10^4$} & \textbf{R}                      & \textbf{0.76}        & \textbf{0.59}         & \textbf{0.45} & \textbf{2.29}                             \\ 
                       &                        & TG                     & 0.92                 & 0.92                  & 0.74          & 0.34                                      \\ \hline
\multirow{6}{*}{2}     & \multirow{2}{*}{$10^2$}   & \textbf{R}                      & \textbf{0.24}        & \textbf{0.14}         & \textbf{0.1}  & \textbf{3.55}                             \\ 
                       &                        & TG                     & 0.92                 & 0.92                  & 0.36          & 1.03                                      \\ \cline{2-7} 
                       & \multirow{2}{*}{$10^3$}  & \textbf{R}                      & \textbf{0.48}        & \textbf{0.34}         & \textbf{0.21} & \textbf{2.78}                             \\  
                       &                        & TG                     & 0.9                  & 0.9                   & 0.52          & 0.65                                      \\ \cline{2-7} 
                       & \multirow{2}{*}{$10^4$} & \textbf{R}                      & \textbf{0.71}        & \textbf{0.61}         & \textbf{0.38} & \textbf{2.02}                             \\ 
                       &                        & TG                     & 0.93                 & 0.93                  & 0.71          & 0.42                                       \\
                       \bottomrule
\end{tabular}
\caption[Statistics on p=5]{\label{p5} Proportion of estimated graphs that share the dimension (Dim), skeleton (Skel) and both skeleton and set of collider triples (Skel \& Coll) with the true graph, and minimal structural hamming distance (SHD*) averaged over simulations.  Estimates use BIC with standard (1) and increased penalty (2), and search initialized at random (R) or at the true graph (TG).}
\end{table}

For each data set, we run the greedy search starting from $(i)$ $300$ randomly selected graphs but also from $(ii)$ the true graph. 
For every single restart, we set the maximum number of iterations of greedy search to be $10^4$.  
In Table \ref{p5} we report on  how often the greedy search gives a model of correct dimension, correct graph skeleton, and both correct collider triples and skeleton when $p=5$.  For each initialization scheme, we consider the BIC with both standard penalty and penalty as in (\ref{eq:penalty}). Furthermore, in Table \ref{frequency5}, the frequency distribution of the difference between the dimension for the true graph and for the estimated graph  is reported for the case $p=5$ and BIC with standard penalty.

In Table \ref{p5}, we also report a structural Hamming distance (SHD); counting edge additions, deletions and reversals needed to move from one mixed graph to another.  The distance we give uses our theorem on skeleton and collider triples to bound the true minimal SHD between a graph representing the selected model and one representing the true model.  In other words, we minimize the SHD over pairs of graphs $(\bar G_1,\bar G_2)$, where $\bar G_1$ has the same skeleton and collider triples as the true graph and $\bar G_2$ has the same skeleton and collider triples as the estimated graph.  While this upper bound SHD* needs not be tight, it is on average only half as large as a naive SHD computed for estimated and true graph directly.  

By Theorem \ref{thm:dim}, the frequency of having the same dimension gives an upper bound for the frequency of getting equivalent models. Additionally, Theorem \ref{thm:equivalence} indicates that the frequency of having both the same set of collider triples and the same skeleton is a lower bound for the frequency of getting equivalent models. Hence, according to our experiment with $100$ simulations, when starting from the true graph, the estimated graph is distributionally equivalent to the true graph between $74\%$ and $92\%$ of times when $p=5$ and $n=10^4$ (BIC with standard penalty, see Table \ref{p5}) and between $61\%$ and $80\%$ when $p=6$ (results not given in Table). On the other hand, if the greedy search algorithm is started from a random graph, the estimated graph belongs to the equivalence class of the true graph between $45\%$ and $76\%$ of times when $p=5$ and between $21\%$ and $60\%$ when $p=6$.  The standard penalty seems to slightly outperform the increased penalty when $p \in \{5,6\}$, although this is not so evident from the upper bound on the true minimal structural Hamming distance. 
Additional partial experiments we carried out for $p=10$ suggest that for a larger number of nodes, smaller Hamming distances result from the increased penalty. 

\begin{table}[htp]
\centering
\begin{tabular}{cccccccc}
\toprule
\textbf{n} &
 \textbf{Start} &
  \multicolumn{5}{c}{\textbf{Dim(EST) - Dim(TG)}} \\ \cmidrule(lr){3-8}   
                     &        & \textbf{-3} & \textbf{-2} & \textbf{-1} & \textbf{0}  & \textbf{1} & \textbf{2}  \\ \midrule
\multirow{2}{*}{$10^2$} & \textbf{R} & \textbf{5}  & \textbf{14}  & \textbf{35} & \textbf{39} & \textbf{7} & \textbf{0} \\ 

                     & TG & 0  &0  &0  & 80 &  20 & 0 \\ \midrule
                     
\multirow{2}{*}{$10^3$} & \textbf{R} & \textbf{1}  & \textbf{3} & \textbf{25} &  \textbf{63}  & \textbf{8} & \textbf{0} \\ 
                     & TG & 0 & 0 & 0 & 88 & 12& 0 \\ \midrule
                     
 \multirow{2}{*}{$10^4$} & \textbf{R} & \textbf{0}  & \textbf{2} & \textbf{14} &  \textbf{76}  & \textbf{8} & \textbf{0}  \\ 
                     & TG & 0 & 0 & 0 & 97 & 7 & 1 \\ \bottomrule
\end{tabular}
\caption{\label{frequency5}Absolute frequency distribution of the difference between the dimension of the estimated graph (EST) and the dimension for the true graph (TG) in $100$ simulations for $p=5$ and BIC with standard penalty.}
\end{table}

\subsection{Protein Expression Data}
\label{subsec:protein}

For further illustration, we consider a frequently studied collection of data sets on expression of $p=11$ proteins in human T-cells \cite{sachs:2005}.  The collection comprises 14 data sets, each obtained under different experimental conditions. The sample size of these data sets ranges from $707$ to $927$. The focus on $11$ proteins is due to limitations in the experimental technology. Figure $2$ in \cite{sachs:2005} suggests presence of further relevant but unobserved proteins and leaves open the possibility of a feedback cycle.  
 
To accommodate departures from our linear Gaussian models, at least at the level of marginal distributions, we consider a Gaussian copula version of the models \cite{liu:2012,harris2013pc}. In other words, each observed variable is assumed to be a deterministic and isotonic function of a Gaussian latent variable. As shown in \cite{liu:2012}, consistent estimation in the Gaussian copula models is achieved by replacing the sample covariance matrix in the Gaussian likelihood function (and the model selection score) by a bias-corrected Kendall's tau correlation matrix.  The entries of this matrix are $\sin(\frac{\pi}{2}\hat{\tau}_{ij})$, where $\hat{\tau}_{ij}$ is Kendall's $\tau$ for the pair of variables  $(X_{i},X_{j})$.  With this substitution, we actually project a $\frac{p(p+1)}{2}$-dimensional covariance matrix to the $\frac{p(p-1)}{2}$-dimensional space of correlation matrices.  However, our sufficient condition for equivalence remains unchanged, and the result on expected dimension still holds as long as the graph has no more than $\frac{p(p-3)}{2}$ edges.

\begin{table}[t]
\centering

\begin{tabular}{cccc}
\toprule
\textbf{Type of edges}  & \textbf{Min} & \textbf{Median}& \textbf{Max}\\
\midrule
All &10  &13 &13\\
Directed &2  &8 &11\\
Bidirected & 2 &4.5 &8\\
\bottomrule
\end{tabular}

\caption[Statistics on the Number of Edges]{\label{tab2}  Summary statistics on the number of edges in the estimated graphs for the $14$ protein expression datasets.}

\end{table}

For each dataset, the greedy search based on the bias-corrected Kendall's tau matrix was repeated 
$300$ 
times, each time starting from a random graph and using BIC with increased penalty. 
The highest scoring graph for each data set was then determined.  
As reported in Table \ref{tab2}, the total number of edges in the $14$ estimated graph ranges from $10$ to $13$, with the median being $13$.  The table also gives these statistics for the count of directed and bidirected edges. The proportion of directed edges in each graph ranges from a minimum of $0.2$ to a maximum of $0.85$. A total of $4$ of the $14$ graphs contain a directed cycle: datasets $3$, $6$ and $7$ each yield a graph with one $3$-cycles, and dataset $4$ leads to one $4$-cycle.

We display two of the selected graphs in Figure \ref{fig:min_max}, one with  minimum (dataset $1$) and one with maximum (dataset $6$) number of edges, the latter also displaying a 3-cycle.
Although further work is needed to fully determine possible equivalences, there is no obvious reason (e.g., by Theorem \ref{thm:equivalence}) for a distributionally equivalent graph without a cycle to exist.  We conjecture that this is indeed not the case.
Considering all $14$ graph estimates together it is reassuring to observe that some structure is shared. Figure \ref{fig:skeleton} shows the (undirected) edges that appear in at least $9$ and $13$ of the skeletons of the estimated graphs.

 \begin{figure}[htp]
  \center
\includegraphics[scale = .65]{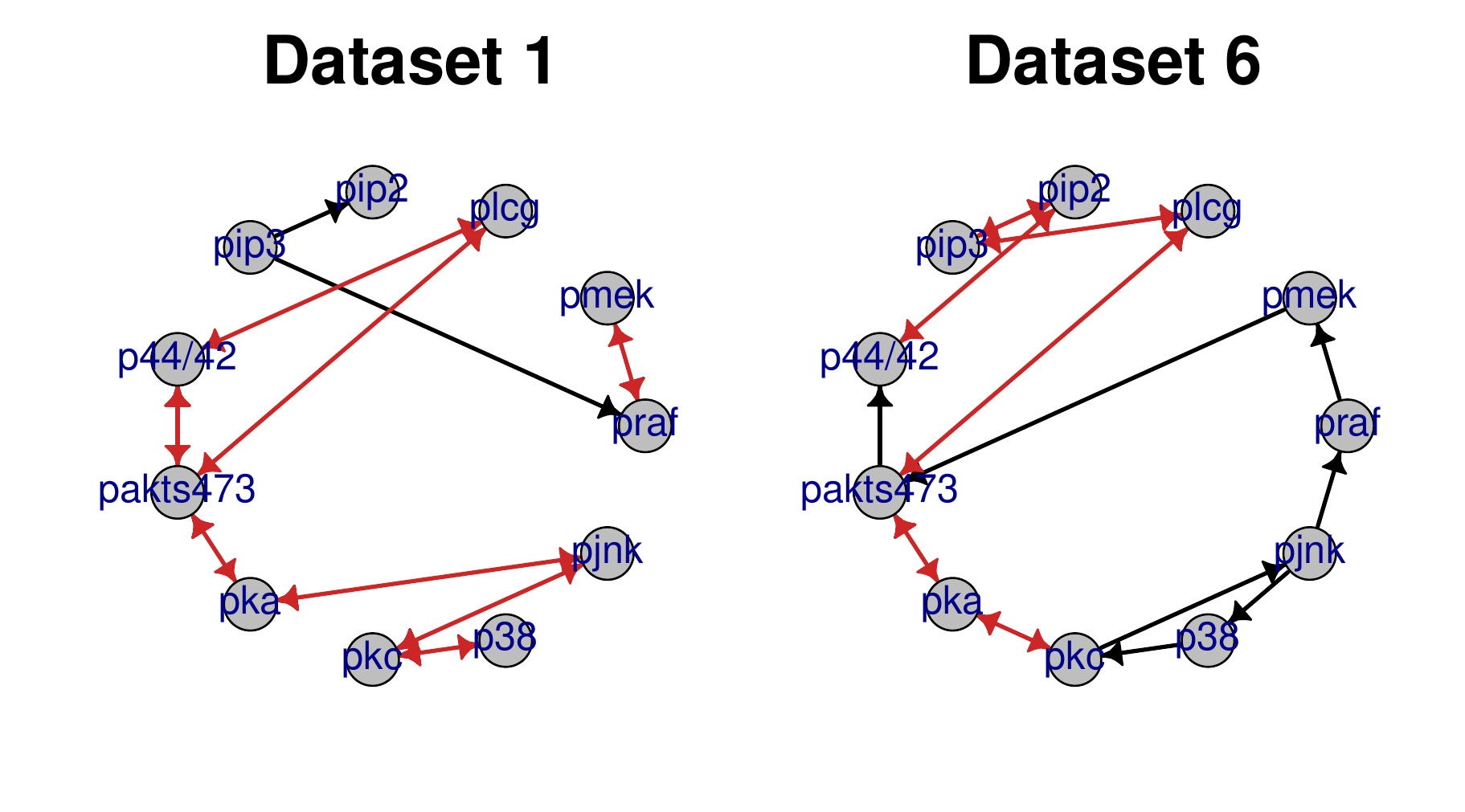}
\vspace*{-.3cm}
\caption{Estimated graphs corresponding to the case of minimum number of edges ($10$ edges, dataset $1$) and maximum number of edges ($13$ edges, dataset $6$).}
  \label{fig:min_max}
\end{figure}

Our selected graphs show good agreement with regulatory relationships described in \cite{sachs:2005}, e.g., the interplay PLCG-PIP2-PIP3 (in at least $12$ of the inferred graphs); the connection PKC-P38-PJNK (all $14$ graphs); the connection P44/42 (named ERK in \cite{sachs:2005}) and PKA-PAKTS473 (named AKT in \cite{sachs:2005}, all $14$ graphs). Moreover, three expected relationships that are well-reported from the field-related literature emerge in our work that were undetected in \cite{sachs:2005}. 
This is the case for the connections: PIP2 to PKC (dataset $14$), PLCG to PKC (dataset $14$) and PIP3 to PAKTS473 (dataset $6$, $7$ and $12$).

 \begin{figure}[htp]
  \center

\includegraphics[scale=.65]{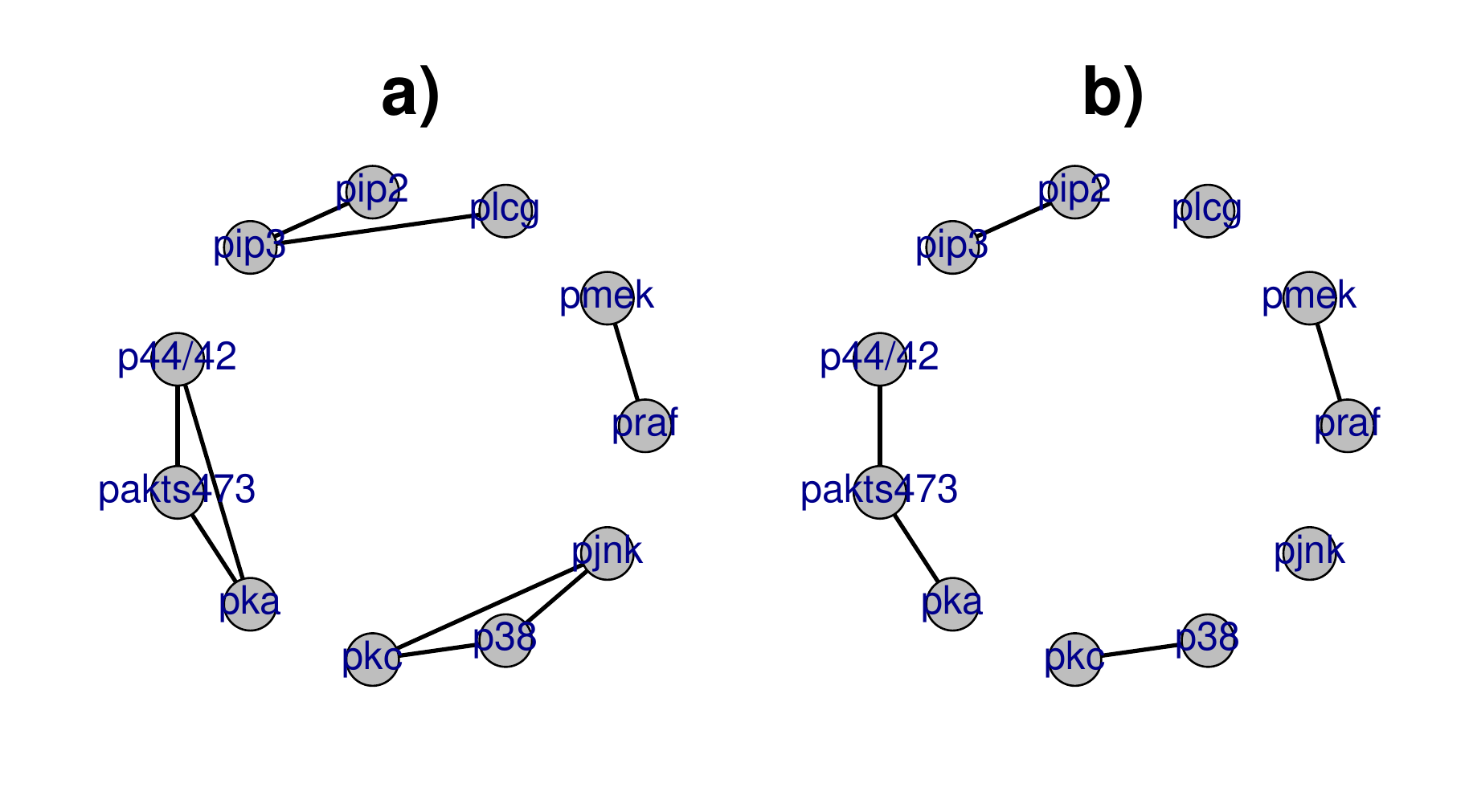}
\vspace*{-.3cm}
\caption{Edges appearing in at least $9$ (a) and 13 (b) skeletons from the estimated graphs.}
  \label{fig:skeleton}
 \end{figure}

Finally, in order to illustrate the behavior of the greedy search itself we focus again on datasets $1$ and $6$.  Figure \ref{fig:greedy} shows the respective search paths in terms of the score achieved at each iteration.  While local optima are possible, we observe that most search paths end with a score near the overall maximum.

 \begin{figure}[htp] 
  \center
\includegraphics[scale=0.9]{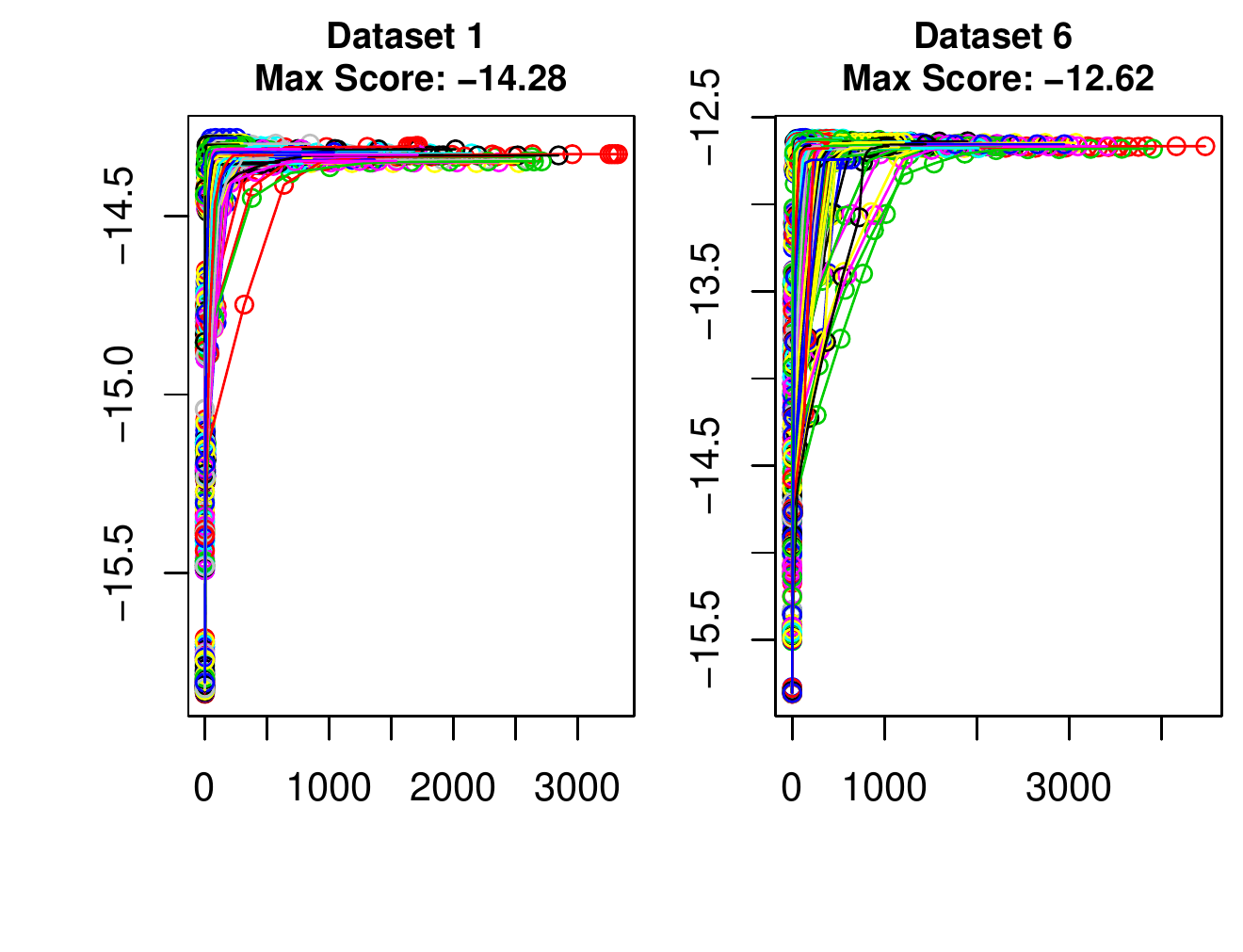}
\caption{Curves of scores versus the time (seconds) in 300 random restarts greedy search for dataset $1$ and dataset $6$. \label{fig:greedy}}
  \end{figure}

\section{Conclusion}
\label{sec:conclusion}
We considered structure learning for linear causal models with Gaussian errors that may exhibit feedback loops and correlation induced by latent variables.  In order to gain tractability in this difficult problem, we restricted our attention to simple mixed graphs.  Such graphs have the favorable property of always inducing a model whose dimension is as one expects from counting parameters.  This property allows one to form meaningful model selection scores.  While a search over simple mixed graphs remains challenging, computationally and statistically, our experiments suggest that useful information can be learned from greedy search methods.  This generalizes similar conclusions for acyclic simple graphs \cite{nowzohour:2017}.

We also showed that an existing sufficient condition for distributional equivalence admits a natural generalization from acyclic to cyclic simple mixed graphs.  However, the condition is very restrictive.  It would be important to find more broadly applicable conditions for distributional equivalence.

\subsubsection*{Acknowledgements}
Carlos Am\'endola was partially supported by the Deutsche Forschungsgemeinschaft (DFG) in the context of the Emmy Noether junior research group KR 4512/1-1. Federica Onori would like to thank Technical University of Munich for hosting her and University of Rome La Sapienza to make her research visit in Germany possible. We are grateful to anonymous reviewers for their constructive feedback on the paper.

\subsection*{References}
\renewcommand{\bibsection}{}
\bibliography{arxiv}

\def\cprime{$'$}
\begin{thebibliography}{10}

\bibitem{bongers2016theoretical}
Stephan Bongers, Jonas Peters, Bernhard Schölkopf, and Joris~M. Mooij.
\newblock Theoretical aspects of cyclic structural causal models, 2016.

\bibitem{brito:pearl:2002}
Carlos Brito and Judea Pearl.
\newblock A new identification condition for recursive models with correlated
  errors.
\newblock {\em Struct. Equ. Model.}, 9(4):459--474, 2002.

\bibitem{MR1991085}
David~Maxwell Chickering.
\newblock Optimal structure identification with greedy search.
\newblock {\em J. Mach. Learn. Res.}, 3:507--554, 2002.

\bibitem{drton:algebraic}
Mathias Drton.
\newblock Algebraic problems in structural equation modeling.
\newblock In {\em The 50th anniversary of {G}r\"{o}bner bases}, volume~77 of
  {\em Adv. Stud. Pure Math.}, pages 35--86. Math. Soc. Japan, Tokyo, 2018.

\bibitem{drton:mlthreshold:2019}
Mathias Drton, Christopher Fox, Andreas K\"{a}ufl, and Guillaume Pouliot.
\newblock The maximum likelihood threshold of a path diagram.
\newblock {\em Ann. Statist.}, 47(3):1536--1553, 2019.

\bibitem{BCD}
Mathias Drton, Christopher Fox, and Y.~Samuel Wang.
\newblock Computation of maximum likelihood estimates in cyclic structural
  equation models.
\newblock {\em Ann. Statist.}, 47(2):663--690, 2019.

\bibitem{sBIC}
Mathias Drton and Martyn Plummer.
\newblock A {B}ayesian information criterion for singular models.
\newblock {\em J. R. Stat. Soc. Ser. B. Stat. Methodol.}, 79:323--380, 2017.

\bibitem{drton:robeva:202x}
Mathias Drton, Elina Robeva, and Luca Weihs.
\newblock Nested covariance determinants and restricted trek separation in
  {G}aussian graphical models.
\newblock {\em arXiv:1807.07561}, 2018.

\bibitem{evans2018model}
Robin~J. Evans.
\newblock Model selection and local geometry, 2018.

\bibitem{Forre}
Patrick Forr{\'e} and Joris~M Mooij.
\newblock Constraint-based causal discovery for non-linear structural causal
  models with cycles and latent confounders.
\newblock In {\em UAI}, 2018.

\bibitem{foygel:draisma:drton:2012}
Rina Foygel, Jan Draisma, and Mathias Drton.
\newblock Half-trek criterion for generic identifiability of linear structural
  equation models.
\newblock {\em Ann. Statist.}, 40(3):1682--1713, 2012.

\bibitem{Foygel:2010}
Rina Foygel and Mathias Drton.
\newblock Extended {B}ayesian information criteria for {G}aussian graphical
  models.
\newblock {\em Adv. Neural Inf. Process. Syst.}, 23:2020--2028, 2010.

\bibitem{MR1863967}
Dan Geiger, David Heckerman, Henry King, and Christopher Meek.
\newblock Stratified exponential families: graphical models and model
  selection.
\newblock {\em Ann. Statist.}, 29(2):505--529, 2001.

\bibitem{ghassami2019characterizing}
AmirEmad Ghassami, Kun Zhang, and Negar Kiyavash.
\newblock Characterizing distribution equivalence for cyclic and acyclic
  directed graphs, 2019.

\bibitem{harris2013pc}
Naftali Harris and Mathias Drton.
\newblock {PC} algorithm for nonparanormal graphical models.
\newblock {\em J. Mach. Learn. Res.}, 14(1):3365--3383, 2013.

\bibitem{horn_johnson_1991}
Roger~A. Horn and Charles~R. Johnson.
\newblock {\em Topics in Matrix Analysis}.
\newblock Cambridge University Press, 1991.

\bibitem{Hyttinen3}
Antti Hyttinen, Frederick Eberhardt, and Patrik~O. Hoyer.
\newblock Learning linear cyclic causal models with latent variables.
\newblock {\em J. Mach. Learn. Res.}, 13:3387--3439, 2012.

\bibitem{hytt_UAI_14}
Antti Hyttinen, Frederick Eberhardt, and Matti J\"arvisalo.
\newblock Constraint-based causal discovery: Conflict resolution with answer
  set programming.
\newblock In {\em {UAI}}, 2014.

\bibitem{kumor:2019}
Daniel Kumor, Bryant Chen, and Elias Bareinboim.
\newblock Efficient identification in linear structural causal models with
  instrumental cutsets.
\newblock In {\em Adv. Neural Inf. Process. Syst. 32}, pages 12477--12486.
  2019.

\bibitem{liu:2012}
Han Liu, Fang Han, Ming Yuan, John Lafferty, and Larry Wasserman.
\newblock High-dimensional semiparametric {G}aussian copula graphical models.
\newblock {\em Ann. Statist.}, 40(4):2293--2326, 2012.

\bibitem{handbook}
Marloes Maathuis, Mathias Drton, and Martin Wainwright, editors.
\newblock {\em Handbook of graphical models}.
\newblock Chapman \& Hall/CRC Handbooks of Modern Statistical Methods. CRC
  Press, Boca Raton, FL, 2019.

\bibitem{nowzohour:2017}
Christopher Nowzohour, Marloes~H. Maathuis, Robin~J. Evans, and Peter
  B\"{u}hlmann.
\newblock Distributional equivalence and structure learning for bow-free
  acyclic path diagrams.
\newblock {\em Electron. J. Stat.}, 11(2):5342--5374, 2017.

\bibitem{Okamoto}
Masashi Okamoto.
\newblock Distinctness of the eigenvalues of a quadratic form in a multivariate
  sample.
\newblock {\em Ann. Statist.}, pages 763--765, 1973.

\bibitem{Rantanen20}
Kari Rantanen, Antti Hyttinen, and Matti J\"{a}rvisalo.
\newblock Discovering causal graphs with cycles and latent confounders: an
  exact branch-and-bound approach.
\newblock {\em Internat. J. Approx. Reason.}, 117:29--49, 2020.

\bibitem{richardson1996}
Thomas Richardson.
\newblock A discovery algorithm for directed cyclic graphs.
\newblock In {\em UAI}, 1996.

\bibitem{sachs:2005}
Karen Sachs, Omar Perez, Dana Pe'er, Douglas~A. Lauffenburger, and Garry~P.
  Nolan.
\newblock Causal protein-signaling networks derived from multiparameter
  single-cell data.
\newblock {\em Science}, 308(5721):523--529, 2005.

\bibitem{EvansEtAl14}
Ilya Shpitser, Robin Evans, Thomas Richardson, and James Robins.
\newblock Introduction to nested {M}arkov models.
\newblock {\em Behaviormetrika}, 41(1):3--39, 2014.

\bibitem{solus2017consistency}
Liam Solus, Yuhao Wang, and Caroline Uhler.
\newblock Consistency guarantees for greedy permutation-based causal inference
  algorithms, 2017.

\bibitem{spirtes:2000}
Peter Spirtes, Clark Glymour, and Richard Scheines.
\newblock {\em Causation, prediction, and search}.
\newblock MIT Press, Cambridge, MA, second edition, 2000.

\bibitem{MR3099113}
Sara van~de Geer and Peter B\"{u}hlmann.
\newblock {$\ell_0$}-penalized maximum likelihood for sparse directed acyclic
  graphs.
\newblock {\em Ann. Statist.}, 41(2):536--567, 2013.

\bibitem{VanOmmenMooij_UAI_17}
Thijs van Ommen and Joris~M. Mooij.
\newblock Algebraic equivalence of linear structural equation models.
\newblock In {\em {UAI}}, 2017.

\bibitem{VermaPearl90}
Thomas~S. Verma and Judea Pearl.
\newblock Equivalence and synthesis of causal models.
\newblock In {\em UAI}. 1991.
\newblock UCLA Cognitive Systems Laboratory, Tech.~Report (R-150).

\bibitem{div}
Luca {Weihs}, Bill {Robinson}, Emilie {Dufresne}, Jennifer {Kenkel}, Kaie
  {Kubjas}, Reginald {McGee}, II, Nhan {Nguyen}, Elina {Robeva}, and Mathias
  {Drton}.
\newblock Determinantal generalizations of instrumental variables.
\newblock {\em Journal of Causal Inference}, 6:1, 2017.

\end{thebibliography}
 
\bigskip 

\bigskip

\small {\bf Authors' addresses:}

\bigskip 

\noindent
\noindent Technische Universit\"at M\"unchen,
\hfill {\tt carlos.amendola@tum.de} \\
Chair of Mathematical Statistics, \hfill {\tt philipp.dettling@tum.de} \\
Department of Mathematics, \hfill {\tt mathias.drton@tum.de} \\
Germany \hfill {\tt jun1.wu@tum.de} \\

\noindent Universit\`a La Sapienza di Roma, 
\hfill {\tt onori.federica@gmail.com}\\
Faculty of 
Information Engineering, \\
Department of Statistical Sciences\\
Italy

\end{document}